\documentclass{article}
\title{Some remarks on the Allen-Cahn equation in $\mathbb{R}^n$}
\usepackage{graphicx} 

\usepackage{fancyhdr}
\usepackage{amsthm}
\date{}

\usepackage[titles]{tocloft}

\usepackage{hyperref}
\usepackage[utf8]{inputenc}
\usepackage[T1]{fontenc}
\usepackage{subfig}
\usepackage{pgfplots}
\usepackage{tikz}
\usepackage{braket}
\usepackage{geometry}
\usepackage{setspace}
\usepackage{enumitem}
\usepackage{float}
\usepackage{amsmath,amssymb,latexsym,esint,cite,mathrsfs,lipsum}
\usepackage{amsthm}
\usepackage{amsfonts}
\usepackage{bm}
\usepackage{bbm}
\usepackage[english]{babel}
\usepackage[nouppercase]{frontespizio}
\theoremstyle{plain}
\newtheorem{definition}{Definition}[section]
\newtheorem{proposition}[definition]{Proposition}
\newtheorem{theorem}{Theorem}

\newtheorem{conjecture}[theorem]{Conjecture}

\newtheorem{remark}[theorem]{Remark}

\numberwithin{theorem}{section}

\author{Gabriele Ferla}
\date{}

\begin{document}
\maketitle
\begin{abstract}
In this short note we present new results on a higher-dimensional generalization of De~Giorgi's conjecture for Allen--Cahn type equations, focusing on dimensions $n \ge 9$. 
Although counterexamples are known in this regime, our goal is to identify assumptions on solutions that still enforce one-dimensional symmetry.

Our approach is inspired by recent advances on Bernstein's problem, in particular by~\cite{f}. 
We prove an analogue of Savin's theorem (see~\cite{s}) in arbitrary dimension: for energy-minimizing solutions whose level sets enjoy $n-7$ directions of monotonicity, we deduce one-dimensional symmetry. 
The result is sharp, since the example constructed in~\cite{dpk} exhibits only $n-8$ monotonicity directions.

In the same spirit, we extend these ideas to nonlocal phase transitions, following the corresponding result in~\cite{FCV}, and we discuss an application to free boundary problems as in~\cite{fb}. 
Finally, we establish a counterpart of the Ambrosio--Cabr\'e theorem~\cite{ac} for solutions that are not necessarily energy minimizers and may lack bounded energy density, assuming instead $n-2$ directions of monotonicity everywhere.

Overall, this note aims to further strengthen the connection between phase transition models and minimal surface theory.
\end{abstract}
\section{Introduction and main results}
We consider, as in~\cite{FV}, a bounded solution of the general equation,
\begin{equation}\label{dwplap}
\Delta_p u = W'(u),
\end{equation}
which, according to the discussion in~\cite{FV}, satisfies $|u|\leq 1$.\\
We recall that $\Delta_p u = \operatorname{div}(|\nabla u|^{p-2}\nabla u)$, following the notation in~\cite{FV}.\\
In this case, the associated energy functional is
\begin{equation}\label{energyplap}
E(u,\Omega)=\int_{\Omega}\frac{|\nabla u|^p}{p}+W(u)\,dx.
\end{equation}
Without loss of generality, we consider the following definition taken from~\cite{FV}.
\begin{definition}\label{dw}
    We take $W: \mathbb{R}\to \mathbb{R}$ to be a double-well potential. More precisely, we suppose that:\\
1) $ 
W \in C^{1,\alpha}_{\mathrm{loc}}(\mathbb{R})\cap C^{1,1}_{loc}(-1,1)$ 
for some $\alpha \in (0,1)$,\\
2) $W(r) > 0$ for any $r \in \mathbb{R} \setminus \{-1,+1\}$\\
3) $W(-1) = W(+1) = 0.$\\
We also suppose that $W'(r) = 0$ if and only if $r \in \{-1, \kappa, 1\}$ where $\kappa \in (-1,1)$.  \\
We fix $p \in (1,+\infty)$.\\
Moreover, we take the following growth conditions near the two wells of $W$. We suppose that there exist some $0 < c < 1 < C$ and some $\theta^* \in (0,1)$ such that:
\begin{itemize}
  \item For any $\theta \in [0,1]$,
  \[
  c \, \theta^p \;\leq\; W(-1+\theta) \;\leq\; C \theta^p
  \qquad \text{and} \qquad
  c \, \theta^p \;\leq\; W(1-\theta) \;\leq\; C \theta^p.
  \]

  \item For any $\theta \in [0,\theta^*]$,
  \[
  c \, \theta^{p-1} \;\leq\; W'(-1+\theta) \;\leq\; C \theta^{p-1}
  \qquad \text{and} \qquad
  -C \theta^{p-1} \;\leq\; W'(1-\theta) \;\leq\; -c \theta^{p-1}.
  \]

  \item $W'$ is monotone increasing in $(-1,-1+\theta^*) \cup (1-\theta^*,1)$.
  \end{itemize}
\end{definition}
The classical potential $W(u)=1/4(1-u^2)^2$, associated with the classical  Allen-Cahn equation $$\Delta u=u^3-u$$, corresponds to the case $p=2$.\\
Under this assumption, using the classical elliptic estimates, $u \in C^{1,\beta}(\mathbb{R}^n)$ for some $\beta>0$.
In this setting, one may adopt the more general definition below, allowing for different potentials, provided stronger regularity assumptions are imposed.\\
This formulation is taken from~\cite{w}.
\begin{definition}[General definition with more regularity]\label{DW}
We take $W :\mathbb{R}\to\mathbb{R} \in C^2_{loc}(\mathbb{R})$  such that:\\
\textbf{1)} $W(1)=W(-1)=0$ , $W>0 $ in $(-1;1)$ ,\\
\textbf{2)} There exist a $\gamma \in (-1,1)$ and a $k >0$ with $W'>0 $ in $(-1;-\gamma)$ and $W'<0 $ in $(\gamma; 1)$ ;
\textbf{3)}$W''(s) \geq k$ for $|s| \geq \gamma$
\end{definition}
A fundamental problem concerning PDEs of the type $\Delta u=W'(u)$ is De Giorgi's conjecture, formulated in~\cite{edg}.
\begin{conjecture}
    Let $u \in C^2(\mathbb{R}^n)$ be a bounded and entire solution of 
    \begin{equation}\label{dwpde}
    \Delta u=W'(u)
    \end{equation}
    where $W$ is a double well potential as in definition \ref{DW} and assume that $\partial_nu>0$.\\
    Then the level sets $\{u=k\}$ are hyperplanes, provided $n \leq 8$.
\end{conjecture}
This conjecture has been solved for $n \leq 3$ by Ambrosio--Cabr\'e in~\cite{ac}.\\
Under the additional assumption
\[
\lim_{x_n \to \pm \infty} u(x',x_n)=\pm 1
\quad \text{for all } x' \in \mathbb{R}^{n-1},
\]
the so-called weak form of the conjecture has been proved by Savin~\cite{s} and Wang~\cite{w}, using different approaches.
\begin{remark}
It is worth noting that, in the case $p=2$ of definition \ref{dw}, it is enough to require that there exists a $x' \in \mathbb{R}^{n-1}$ such that $\lim_{x_n \to \pm \infty} u(x',x_n)=\pm 1$, according to Lemma~2.1 of ~\cite{FV}. 
\end{remark}
It is well known that this assumption implies that the solution is a local energy minimizer for the energy functional 
\begin{equation}\label{energiafunzionale}
E(u,\Omega)=\int_{\Omega} \frac{1}{2}|\nabla u|^2+W(u)dx
\end{equation}\\
In higher dimensions, several conditions ensuring one-dimensional symmetry of solutions have been established (see, for instance,~\cite{FV,s,w}), inspired by the classical Bernstein-type theorems presented in the celebrated book by Giusti (see~\cite{g}).
Here, we aim to provide additional simple conditions, in the spirit of the more recent Bernstein-type theorem stated in~\cite{f}.\\
For the reader’s convenience, we recall its statement.
\begin{theorem}\label{farinabernstein1}
       If $\varphi: \mathbb{R}^n \to \mathbb{R}$ is a $C^2$ function with $\partial_i\varphi >k$ for some $k \in \mathbb{R}$ and for all $i \in [8,\ldots, n]$, and if $\text{graph}(\varphi)$ is a minimal graph, then $\varphi$ is affine.
\end{theorem}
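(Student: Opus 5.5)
We argue by a blow-down and dimension reduction, in the spirit of Bernstein-type arguments. First we translate the one-sided bounds $\partial_i\varphi>k$, $i=8,\dots,n$, into monotonicity of the graph. Replace $\varphi$ by $\tilde\varphi(x)=\varphi(x)-k\sum_{i\ge 8}x_i$. Its graph is no longer minimal, but it is obtained from $\operatorname{graph}(\varphi)$ by a linear shear of $\mathbb{R}^{n+1}$. It is cleaner to work directly with the minimal hypersurface $\Sigma=\operatorname{graph}(\varphi)$ and its upward unit normal $\nu=(-\nabla\varphi,1)/\sqrt{1+|\nabla\varphi|^2}$. Set $e_{n+1}$ to be the vertical direction. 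For each $i\ge 8$ define $w_i=\langle\nu,e_{n+1}\rangle\,k+\langle\nu,e_i\rangle\cdot(-1)$, i.e.\ $w_i=\langle \nu, k e_{n+1}-e_i\rangle\cdot$ suitably signed. Then $w_i=(\partial_i\varphi-k)/\sqrt{1+|\nabla\varphi|^2}>0$ on $\Sigma$.

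Each $w_i$ is a Jacobi field, since it is the normal component of a constant vector: $\Delta_\Sigma w_i+|A|^2w_i=0$. The same holds for $w_{n+1}=\langle\nu,e_{n+1}\rangle>0$. Thus we have $n-6$ linearly independent positive Jacobi fields, coming from the directions $v_i=ke_{n+1}-e_i$ ($i=8,\dots,n$) together with $e_{n+1}$.

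Next we pass to a blow-down. Minimal graphs are area-minimizing, so $\Sigma_R=R^{-1}\Sigma$ subconverges to an area-minimizing cone $C$. The positivity of $\langle\nu,v\rangle$ for every $v$ in the open convex cone generated by $e_{n+1},v_8,\dots,v_n$ passes to the limit as $\langle\nu_C,v\rangle\ge 0$ on $C$. Hence $C$ is monotone, and so a graph or a union of vertical pieces, in each of these $n-6$ independent directions. A standard argument then applies: an area-minimizing cone whose normal has nonnegative component along $v$ either splits off the line $\mathbb{R}v$ or is a half-space-type configuration. Iterating, $C$ splits as $C'\times\mathbb{R}^{m}$ with $C'$ a minimizing cone in dimension at most $(n+1)-(n-6)=7$. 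By Simons' theorem, minimizing cones in $\mathbb{R}^{d}$ with $d\le7$ are flat. Hence $C$ is a hyperplane.

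Finally, a minimal graph whose tangent cone at infinity is a hyperplane has bounded gradient. By Allard/De Giorgi $\varepsilon$-regularity, flatness at all large scales gives Lipschitz bounds on $\varphi$. Moser's theorem then shows that $\varphi$ is affine.

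The main obstacle is the splitting step. We must show that weak monotonicity $\langle\nu_C,v\rangle\ge0$ of a minimizing cone forces either translation invariance along $v$ or a flat cone. We would handle this with the strong maximum principle for the Jacobi field $\langle\nu_C,v\rangle$ on the regular part of $C$. Since the regular part is connected for a minimizing cone, this field is either identically zero, giving invariance along $v$, or strictly positive. In the strictly positive case $C$ is stable in a strengthened sense. Combining it with $1$-homogeneity, a positive degree-zero Jacobi field on a cone forces $|A|\equiv0$, by integrating against cutoff functions as in the Simons/Schoen argument. The delicate part is controlling the singular set, which has codimension at least $7$, when performing these integrations. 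A secondary difficulty is keeping track of linear independence of the $v_i$ after each splitting.
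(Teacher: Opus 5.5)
Your proposal is correct, modulo the standard singular-set technicalities you already flag, and it follows the same scheme the paper relies on (quoting \cite{f} for the statement and mirroring that scheme in its proof of Theorem~\ref{n-7derivatives}): blow down to a minimizing cone that is monotone in the $n-6$ independent directions $e_{n+1}$ and $-e_i-ke_{n+1}$, split off these lines one at a time, and finish with Simons' theorem in $\mathbb{R}^7$. Note that $-e_i-ke_{n+1}$ is the right sign, since $\langle\nu,-e_i-ke_{n+1}\rangle=(\partial_i\varphi-k)/\sqrt{1+|\nabla\varphi|^2}$, whereas your $ke_{n+1}-e_i$ gives $\partial_i\varphi+k$ in the numerator.

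What you add is a self-contained Jacobi-field and Simons-eigenvalue proof of the splitting step, which the paper only cites as Proposition~2.1 of \cite{f}. You phrase it correctly as a split-or-flat dichotomy. A tilted half-space is monotone but not a cylinder, so the bare ``cylinder'' conclusion as paraphrased in the paper cannot hold verbatim.

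Your endgame can also be simplified. A flat multiplicity-one tangent cone at infinity gives density $1$ at infinity, so by the monotonicity formula every density ratio of the graph is identically $1$, and the graph is itself a hyperplane, necessarily non-vertical. This bypasses Allard and Moser and also disposes of the vertical-plane case that your Allard--Moser step leaves unaddressed.
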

Inspecting the proof, one observes that the method can be applied directly to the Allen–Cahn equation (and, more generally, to equations with a double-well potential).
We therefore state a rigidity theorem for Allen--Cahn type PDEs, also taking into account the $p$-Laplace operator, since the flatness theorem of Savin remains valid in this setting (see~\cite{fvplap,sp}).\\
As mentioned above, a flatness theorem can be established in this framework.\\
Let $u$ be a solution of \eqref{dwplap} minimizer of the functional (\ref{energyplap}).
If 
\begin{equation}\label{scaled}
u_\varepsilon(x)=u(x/\varepsilon)
\end{equation}
then
$\varepsilon^p\,\Delta_p u_\varepsilon = W'(u_\varepsilon)$ and $u_\varepsilon$ are energy minimizers of the scaled functionals
\begin{equation}\label{scaledfunctionals}
    E^\varepsilon(u,\Omega)=\int_{\Omega}\frac{\varepsilon^{p-1}|\nabla u_\varepsilon|^p}{p}+\frac{1}{\varepsilon}W(u_\varepsilon)\,dx.
\end{equation}
Energy-minimizing (or suitably controlled critical) solutions of the $\varepsilon$--Allen--Cahn equation converge (along subsequences) in $L^1_{\mathrm{loc}}$ (see~\cite{sp,fvplap}) to a $\pm1$-valued function; the diffuse interfaces converge to a set of minimal perimeter i.e., $\partial E$ is a minimal hypersurface.\\
It follows from~\cite{sp,fvplap} that the scaled level sets $\{u_\varepsilon=0\}$ converge (up to a subsequence) to $\partial E$.\\
If $E$ is a half space, then $u$ coincides with the one-dimensional heteroclinic solution (see, for instance, Corollary~7 in~\cite{fvplap}).\\
We are now ready to state the main theorem.
\begin{theorem} \label{n-7derivatives}
     Let $u \in C^{2}(\mathbb{R}^{n})$ be a local minimizer for the energy functional (\ref{energyplap}) and an entire, bounded solution of (\ref{dwplap})
    where $W$ is as in definition \ref{DW} with $p=2$ , or as in definition \ref{dw}.\\
    Assume that $\partial_i u >0 \quad \forall i \in [8,...n]$ on $\{u=0\}$ (equivalently, on a level set $\{u=t\}$ with $|t| < 1$).\\
    Then $u$ is the one dimensional heteroclinic solution.
\end{theorem}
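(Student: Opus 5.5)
Since $u$ is a local minimizer, I would blow down and use Savin's flatness theorem. Set $u_\varepsilon(x)=u(x/\varepsilon)$ as in (\ref{scaled}). Along a sequence $\varepsilon_k\to0$, the minimizers of (\ref{scaledfunctionals}) converge in $L^1_{\mathrm{loc}}$ to $\chi_E-\chi_{E^c}$. Here $\partial E$ is a minimal cone: the monotonicity formula for minimizers, adapted to the $p$-Laplacian as in \cite{sp,fvplap}, gives that the blow-down is a cone, and the zero level sets $\{u_{\varepsilon_k}=0\}$ converge locally uniformly in Hausdorff distance to $\partial E$. The goal is to show that $\partial E$ is a hyperplane. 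Once that holds, the flatness theorem of Savin (valid for $p\neq2$ by \cite{fvplap,sp}) shows that $\{u=0\}$ is flat at every large scale, and hence that $u$ is one-dimensional (Corollary~7 of \cite{fvplap}).

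\textbf{Transferring monotonicity to the cone.} Since $\partial_i u>0$ on $\{u=0\}$ for $i=8,\dots,n$, the set $\{u=0\}$ is, near each of its points, a graph in each direction $e_i$ with $i\ge8$. More usefully, along each line parallel to $e_i$, the function $t\mapsto u(x+te_i)$ crosses $0$ at most once, and always from negative to positive: at any zero the derivative is positive, so two consecutive zeros are impossible. Hence $\{u>0\}$ is invariant under translation by $+e_i$, i.e. $\{u>0\}+te_i\subset\{u>0\}$ for $t\ge0$. This property is scale invariant, so it holds for $u_\varepsilon$, and it passes to the $L^1$ limit: $E+te_i\subset E$ up to null sets, for all $t\ge0$ and $i\ge8$. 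Since $E$ is a cone, it is therefore invariant under the closed convex cone generated by $e_8,\dots,e_n$.

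\textbf{Killing the cone.} This is the main obstacle, and it is where I would mimic Farina's argument for Theorem~\ref{farinabernstein1}. A minimal cone $C=\partial E$ with $E+te_i\subset E$ for all $t\ge0$ and $i\ge 8$ is monotone in $n-7$ independent directions. I would take a point of $\partial E$ and blow up there, or dimension-reduce à la Federer, to show that $E$ actually splits off these directions. The idea is that the monotonicity makes each translate $E+te_i$ a one-sided competitor. By the strong maximum principle for minimal boundaries, either $E+te_i=E$ or the two boundaries are disjoint. Disjointness is impossible for $t>0$, since both are cones through the origin: the strong maximum principle, applied at the vertex after blow-down of $E+te_i$, which returns $E$, forces equality. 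So $E$ is invariant under each $e_i$, $i\ge8$, and $E=E'\times\mathbb{R}^{n-7}$ with $E'\subset\mathbb{R}^7$ a minimizing cone. By Simons' theorem, minimizing cones in $\mathbb{R}^7$ are flat, so $E$ is a half space.

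The delicate point is justifying the maximum-principle step when $\partial E$ is singular. For this I would use the Ilmanen or Simon strong maximum principle for stationary varifolds, or argue via a measure-theoretic version, as in Farina's proof.
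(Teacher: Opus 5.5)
Your overall plan follows the paper's proof: blow down, transfer the $e_i$-monotonicity to the limit set, split off $\mathbb{R}^{n-7}$, apply Simons in $\mathbb{R}^7$, and finish with Savin's theorem. Your one-crossing argument giving $\{u>0\}+te_i\subseteq\{u>0\}$ is correct; it is the paper's $S_\varepsilon-e_i\subseteq S_\varepsilon$. The paper delegates the splitting step to Proposition~2.1 of \cite{f}. You try to prove that step yourself, and the proof fails.

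\textbf{The gap.} You claim that $\partial E$ and $\partial(E+te_i)$ cannot be disjoint for $t>0$. This is false, for two reasons.
\begin{itemize}
\item $E+te_i$ is a cone with vertex $te_i$, not $0$, so its boundary need not pass through the origin.
\item Knowing that $E+te_i$ and $E$ have the same blow-down is information at infinity. It is not a contact point at finite distance, which is what any strong maximum principle (Simon, Ilmanen) needs.
\end{itemize}
Disjointness really occurs. Take $u(x)=H(x\cdot\nu)$, with $H$ the one-dimensional heteroclinic profile and $\nu_i>0$ for $i=8,\ldots,n$. This $u$ satisfies every hypothesis of the theorem, and its blow-down is $E=\{x\cdot\nu>0\}$. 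Then $\partial E$ and $\partial E+te_i$ are distinct parallel hyperplanes. So your intermediate claims, that $E$ is invariant under every $e_i$ and that $E=E'\times\mathbb{R}^{n-7}$, fail exactly for the solutions the theorem describes.

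\textbf{What is true.} For each $i$ you get a dichotomy.
\begin{itemize}
\item Either $\partial E$ meets $\partial E+te_i$ for some $t>0$. Then the strong maximum principle gives $E+te_i=E$.
\item Or they are disjoint for every $t>0$. Then each line parallel to $e_i$ meets $\partial E$ at most once, so $\partial E$ is a generalized graph over $e_i^{\perp}$ of a function homogeneous of degree one. You still have to show that in this case $E$ is a half space; this is De Giorgi's argument for minimal cones that are generalized subgraphs, see \cite{g}. For example, show the function is locally bounded. Then it is a smooth solution of the minimal surface equation, and being smooth at $0$ and homogeneous of degree one, it is linear.
\end{itemize}
Iterating over $i=8,\ldots,n$ gives either a half space at some stage or $E=M\times\mathbb{R}^{n-7}$. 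Only then does Simons' theorem close the argument. The paper's sentence that Proposition~2.1 of \cite{f} makes $E^*$ a cylinder should also be read with this half-space alternative.

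\textbf{A smaller point.} For $p\neq 2$ you assert that the first blow-down is already a cone, via a monotonicity formula for the $p$-Laplacian. The paper avoids needing this. It blows down the minimal set $E$ a second time to get a cone, then uses that a minimal set whose tangent cone at infinity is a half space is itself a half space.
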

\begin{remark}
In this theorem, it is sufficient to consider an energy-minimizing solution 
$u$ such that one of its level sets $\{u=t\}$ can be represented by a function $\varphi: \mathbb{R}^{n-1} \to \mathbb{R}$ for which there exists $k \in \mathbb{R}$ such that $\partial_i \varphi >k$ for all $i \in [8,...n-1]$.
\end{remark}
The previous theorem generalizes Theorems~2.3 and~2.4 contained in~\cite{s}, and for $n=8$ it corresponds to the weak form of the conjecture.\\
A simple observation is that the same result can be stated for quasi-minimizers.\\
This follows directly from~\cite{fvplap}.\\
We prefer to keep the present formulation in order to provide a clear discussion.\\
Another natural related question is to identify conditions weaker than energy minimality under which a solution is one-dimensionally symmetric.
Motivated by the main theorem of Ambrosio--Cabr\'e (see~\cite{ac}, Theorem~1.2), we prove one-dimensional symmetry for solutions that are monotone in $n-2$ linearly independent directions everywhere.\\
In the results that follow, we consider general monotone solutions, without assuming that they are energy minimizers.\\
We first state the theorem in its most general form, and then derive some simpler and more transparent consequences. In this setting, we do not assume either bounded energy density or energy minimality.\\
Moreover, we show that the theorem remains valid under weaker regularity assumptions on the double-well potential, as specified in definition~\ref{dw}.
We begin with a definition that simplifies the notation.
\begin{definition}
    We say that a set $C \subseteq S^{n-1}$ is a $n-2$ cone around $e_n$ if there exists $\varepsilon >0$ such that:
    $$C=\{ v \in S^{n-1} \cap \text{span}(e_3,..e_n): |v - e_n|\leq \varepsilon\}$$
\end{definition}
We can now state the main results.
\begin{theorem}\label{n-2}
     Let $u \in C^{2}(\mathbb{R}^{n})$ be a bounded entire solution of (\ref{dwpde}) where $W \in C^{1,1}_{loc}$ is a double well potential as in definition \ref{dw}, with $p=2$.\\
    We denote by $\kappa$ the constant introduced in definition \ref{dw} .\\
    Assume that :\\
    \textbf{1)} $\partial_n u >0$ everywhere\\
    \textbf{2)} There exists a $\gamma >0$ such that for each $\kappa - \gamma<\delta< \kappa+ \gamma$ there exists a $n-2$ dimensional cone $C(\delta)$ such that:
    $$\partial_{v}u(x) \geq 0 \quad  \text{for every $x \in \{u= \delta \}$ and every $v \in C(\delta)$}.$$\\
    Then the solution $u$ is the one dimensional heteroclinic solution.
\end{theorem}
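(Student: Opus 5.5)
Writing $x=(x_1,x_2,y)$ with $y\in\mathbb{R}^{n-2}$, I would reduce the statement to the two-dimensional Ambrosio--Cabr\'e/Ghoussoub--Gui setting. The idea is to show that, thanks to the $n-2$ cone of monotone directions, $u$ depends on $y$ only through a single direction $e\in C(\delta)$. Each variable $x_1,x_2$ then contributes one further direction, and a bounded monotone solution of dimension at most three is one dimensional by \cite{ac}.

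\textbf{Step 1: the monotone directions extend off the level sets.} Fix $\delta$ with $|\delta-\kappa|<\gamma$ and $v\in C(\delta)$. Since $\partial_n u>0$, every level set $\{u=\delta\}$ is a graph over $\mathbb{R}^{n-1}$ and $u$ is strictly increasing along $e_n$. I would show that $\partial_v u\ge0$ on $\{u=\delta\}$ for all $v\in C(\delta)$ means that the sublevel set $\{u<\delta\}$ is invariant under translations by $tv$, $t\le 0$. This is just the statement that the graph function is monotone in the projected directions.

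I then want to propagate this to all level sets $\{u=s\}$, $s\in(-1,1)$, with a cone $C'$ that does not depend on $s$. My first attempt would be a moving-plane / sliding argument. For $v$ in the intersection cone, which I would obtain by a compactness selection of directions near $e_n$ (perhaps shrinking $\gamma$), compare $u$ with $u^t(x)=u(x+tv)$. Then $w=u^t-u$ solves the linearized equation $\Delta w=c(x)w$, where $c(x)=\int_0^1W''(\cdot)$ is bounded because $W\in C^{1,1}_{loc}$. In the regions where $u\in(\kappa-\gamma,\kappa+\gamma)$, hypothesis 2 gives $w\ge0$ on the level sets, hence $w\ge0$ there. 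Where $|u-\kappa|\ge\gamma$, the monotonicity of $W'$ near $\pm1$ from Definition \ref{dw} should give $c\ge0$ once $u$ is close to the wells. In the intermediate band $\gamma\le|u-\kappa|$ with $u$ away from $\pm1$, I would instead use $\partial_n u>0$ and slide along $e_n$ first: compare $u(x+tv+se_n)$ with $u$ and decrease $s$.

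\textbf{Step 2: sliding conclusion.} Once $\partial_v u\ge0$ everywhere for all $v$ in an open $(n-2)$-dimensional cone around $e_n$ inside $\mathrm{span}(e_3,\dots,e_n)$, I apply the strong maximum principle to $\partial_v u$, which solves $\Delta\phi=W''(u)\phi$ (weakly, with $W''\in L^\infty_{loc}$). Thus either $\partial_v u>0$ or $\partial_v u\equiv0$. Directions on the boundary of the cone are limits, so the set of admissible directions $v\in\mathrm{span}(e_3,\dots,e_n)$ with $\partial_v u\ge0$ is a closed convex cone. It has nonempty interior in that $(n-2)$-space, hence it contains $n-2$ linearly independent directions.

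\textbf{Step 3: reduction of dimension.} This is the main obstacle: $n-2$ independent monotone directions do not obviously force $u$ to depend on fewer variables. I would argue via the limit profiles $\underline u=\lim_{t\to-\infty}u(x+tv)$ and $\overline u=\lim_{t\to+\infty}u(x+tv)$, taken with $v$ in the interior of the cone. These are stable solutions, and stability holds in every direction of the cone. Following the scheme of Theorem \ref{farinabernstein1}, I expect the common-monotonicity condition to imply that the gradient direction $\nabla u/|\nabla u|$ has a component in $\mathrm{span}(e_3,\dots,e_n)$ lying in the dual cone. Combined with the Sternberg--Zumbrun stability inequality, tested with $\eta\,|\nabla u|$ where $\eta$ is a cutoff depending only on $(x_1,x_2)$ (which is where the planar Liouville argument of \cite{ac} gives a log-cutoff with no energy bound needed), this should force the level sets to be flat in the $y$ directions. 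Hence $u(x)=g(x_1,x_2,e\cdot y)$, a three-dimensional monotone solution. Ambrosio--Cabr\'e then gives one-dimensionality, and monotonicity in $e_n$ identifies $u$ with the heteroclinic.

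I expect both Step 3 and the intermediate band in Step 1 to need the most care, since no energy bound is available there.
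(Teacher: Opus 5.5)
There is a genuine gap, and it sits where you locate the difficulty. Step~3 is a heuristic, not a proof. The reduction $u=g(x_1,x_2,e\cdot y)$ is essentially as strong as the theorem, and the tool you propose cannot produce it.

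\begin{itemize}
\item The Sternberg--Zumbrun inequality needs a test function compactly supported in $\mathbb{R}^n$. With $\eta=\eta(x_1,x_2)$, the right-hand side $\int|\nabla u|^2|\nabla\eta|^2$ runs over an unbounded set in the $y$-variables. For $n\ge4$ it is already infinite when $u$ is the heteroclinic $g(x_n)$.
\item The Liouville argument behind the logarithmic cutoff in \cite{ac} needs $\int_{B_R}|\nabla u|^2\le CR^2$. For $u$ in $\mathbb{R}^n$ the best available bound is $CR^{n-1}$, and even that presupposes control of the limits as $x_n\to\pm\infty$. This is exactly the obstruction to De Giorgi's conjecture for $n\ge4$, and cone-monotonicity does not remove it. In \cite{ac} and \cite{FV} the argument is applied in $\mathbb{R}^2$, to profiles already known to be two-dimensional.
\end{itemize}

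Step~1 has a related defect. Sliding $u(\cdot+tv+se_n)$ against $u$ needs a starting configuration, i.e.\ uniform control of $u$ as $x_n\to\pm\infty$, which you do not have. The limits $\pm1$ are part of what must be proved, and for the same reason level sets are graphs only over open subsets of $\mathbb{R}^{n-1}$. Moreover, where the coefficient $c(x)$ of $\Delta w=c\,w$ is negative, there is no maximum principle for $w=u^t-u$ on unbounded regions.

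The missing idea is to apply the cone hypothesis to the limit profiles $\overline u(x')=\lim_{x_n\to+\infty}u(x',x_n)$ and $\underline u$, not to $u$. The paper's route is:

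\begin{enumerate}
\item \textbf{Profiles are locally two-dimensional.} Suppose $\{u=\kappa\}$ escapes to $x_n=+\infty$ over a bounded set of $x'$. Then the translates $u(x',x_n+x_h^n)$ converge in $C^1_{\rm loc}$ to $\overline u$. This limit has $\partial_n\overline u\equiv0$ and takes values in $(\kappa-\gamma,\kappa+\gamma)$ on a ball, so the cone condition is available there. Since $e_n$ lies in the relative interior of $C(\delta)$, the linear map $v\mapsto\partial_v\overline u$ is nonnegative on the cone and vanishes at $e_n$. Hence it vanishes on all of $\mathrm{span}(e_3,\dots,e_n)$.
\item \textbf{Globally two-dimensional.} Unique continuation, which is where $W\in C^{1,1}_{\rm loc}$ enters, makes $\overline u$ two-dimensional everywhere. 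In the remaining case $\overline u\equiv1$ by the exponential estimate of Section~3.
\item \textbf{Minimality.} With two-dimensional profiles, Theorem~1.1 of \cite{FV} gives $\overline u\equiv1$ and $\underline u\equiv-1$, so $u$ is a local minimizer.
\item \textbf{Rigidity from minimal surfaces, not from \cite{ac}.} By Theorem~\ref{n-7derivatives}, the blow-down minimal cone inherits the $n-2\ge n-7$ monotone directions of $\{u=\kappa\}$ and splits off these directions. It is therefore a half-space, and Theorem~\ref{Savin} identifies $u$ with the heteroclinic.
\end{enumerate}

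Neither global monotonicity away from the band (your Step~1) nor a three-variable reduction of $u$ (your Step~3) is ever needed.
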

We now state a variant of the previous theorem, which is also valid for a different class of operators.
    \begin{proposition}\label{n-2dirp}
        Let $u \in C^{1}(\mathbb{R}^{n})$) be a bounded solution of (\ref{dwplap}) where $W$ is a double well potential as in definition \ref{dw} with $p \in (1,+\infty)$, or as in definition \ref{DW} with $p=2$.\\
    Assume that :\\
    \textbf{1)} $\partial_n u >0$ everywhere\\
    \textbf{2)} For each $\delta$ with $|\delta|< 1$ there exists there exists a $n-2$ dimensional cone $C(\delta)$ such that:
    $$\partial_{v}u(x) \geq 0 \quad  \text{for every $x \in \{u= \delta \}$ and every $v \in C(\delta)$}.$$\\
    Then the solution $u$ is the one dimensional heteroclinic solution.
    \end{proposition}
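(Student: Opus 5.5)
The idea is to turn the cone condition on every level set into monotonicity of $u$ in a whole open cone of directions in $\mathrm{span}(e_3,\dots,e_n)$, valid on all of $\mathbb{R}^n$. Such monotonicity forces the level sets to be Lipschitz graphs whose gradients are confined to a cone that is two-dimensional in the transversal sense. Then $u$ depends on at most two horizontal variables plus a controlled direction, and the problem reduces to the two-dimensional rigidity results available for monotone solutions.

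\textbf{Step 1 (global cone monotonicity).} Since $|u|<1$ everywhere (by the discussion in \cite{FV}), every $x\in\mathbb{R}^n$ lies on some level set $\{u=\delta\}$ with $|\delta|<1$. Hypothesis 2) therefore gives, at each point $x$, a cone $C(u(x))$ of directions with $\partial_v u(x)\ge 0$. The cones depend on $\delta$, so first I would show that the aperture can be chosen uniformly on compact ranges of $\delta$; failing that, I would work with the measurable selection $\delta\mapsto\varepsilon(\delta)$. Either way, for each $x$ the vector $\nabla u(x)$ lies in the dual cone $\{\xi:\xi\cdot v\ge 0\ \forall v\in C(u(x))\}$. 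Projected onto $\mathrm{span}(e_3,\dots,e_n)$, this says that $\xi\cdot e_n\ge \varepsilon|\xi'|$, where $\xi'$ is the component of $\xi$ in $\mathrm{span}(e_3,\dots,e_{n-1})$. Combined with $\partial_n u>0$, this yields on each level set $\{u=\delta\}$ the gradient bound
\[
|\partial_j u|\le \varepsilon(\delta)^{-1}\partial_n u\qquad (3\le j\le n-1).
\]

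\textbf{Step 2 (level sets are graphs with bounded slope).} Because $\partial_n u>0$, the level set $\{u=\delta\}$ is a graph $x_n=\varphi_\delta(x_1,\dots,x_{n-1})$ over an open subset of $\mathbb{R}^{n-1}$. The bound from Step 1 gives $|\partial_j\varphi_\delta|\le \varepsilon(\delta)^{-1}$ for $3\le j\le n-1$. So $\varphi_\delta$ is Lipschitz in the $n-3$ variables $x_3,\dots,x_{n-1}$, with no control in $x_1,x_2$.

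\textbf{Step 3 (blow-down to a two-dimensional problem).} I would consider the limit profiles $\lim_{x_n\to\pm\infty}u=\underline u,\overline u$. These are stable solutions, monotone in the cone directions and independent of $x_n$. A sliding argument in $x_3,\dots,x_{n-1}$ combined with the Lipschitz bound shows that they are independent of these variables too: they are bounded, cone-monotone, and translation invariant in $e_n$, and a nonzero derivative $\partial_j\underline u$ would contradict the slope bound on level sets of $u$ far out. Hence $\underline u,\overline u$ are stable solutions in $\mathbb{R}^2$, and by the two-dimensional results of \cite{FV} (valid for every $p$) they are one-dimensional or constant. The structural condition on $W$ in Definition \ref{dw} rules out the intermediate constant $\kappa$ being stable, so the limits are $\pm1$. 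Then \cite{FV}, Lemma 2.1 and the remark after the conjecture, make $u$ a local minimizer, and the Lipschitz graph property gives that blow-downs of the level sets are minimal cones which are graphs in $n-3$ directions.

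\textbf{Step 4 (conclusion).} Now I would argue as in the proof of Theorem \ref{n-7derivatives}, via the approach of \cite{f}. The blow-down minimal cone is a Lipschitz graph in $n-3$ directions, so by the dimension reduction of \cite{f} it splits off those lines and is a product of $\mathbb{R}^{n-3}$ with a minimal cone of dimension at most $2$, hence a hyperplane. Savin's flatness theorem (\cite{sp,fvplap}) then gives that $u$ is one-dimensional.

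The main obstacle is Step 3. Deducing the limits $\pm1$ from only $n-2$ directions of monotonicity, without assuming minimality, requires controlling the limit profiles uniformly despite the $\delta$-dependence of the cones. This is where I expect the work to lie; my first attempt would be a sliding/maximum-principle argument on the limit profiles.
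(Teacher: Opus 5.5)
Your architecture matches the paper's. You show that $\overline{u},\underline{u}$ do not depend on $x_3,\dots,x_{n-1}$, obtain $\overline{u}\equiv1$, $\underline{u}\equiv-1$ and local minimality from \cite{FV}, and then run the proof of Theorem~\ref{n-7derivatives'} (you even have $n-2\ge n-7$ independent monotone directions).

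For the first step your route differs. The paper passes the cone condition to the $C^1_{\mathrm{loc}}$ limit of $u(\cdot+he_n)$ and uses $\partial_n\overline{u}\equiv0$ to make it two-sided in $e_3,\dots,e_{n-1}$. You instead use the slope bound on the level-set graphs. Your route works, and it removes what you call the main obstacle:
\begin{itemize}
\item Fix $\delta$. Then $\varphi_\delta$ is finite exactly on the open set $\{\underline{u}<\delta<\overline{u}\}$, and it tends to $\pm\infty$ at its boundary, because $\underline{u}<u<\overline{u}$ strictly.
\item So the bound $|\partial_j\varphi_\delta|\le C(\delta)$ makes this set invariant under translations along $e_j$, $3\le j\le n-1$.
\item Given $x'$ and $\delta<\overline{u}(x')$, set $\delta'=\max\{\delta,\tfrac12(\underline{u}(x')+\overline{u}(x'))\}$. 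Then $x'$ lies in the invariant set for $\delta'$, so $\overline{u}(x'+te_j)>\delta'\ge\delta$ for all $t\in\mathbb{R}$.
\item Hence $\overline{u}(x'+te_j)\ge\overline{u}(x')$, with equality by symmetry; likewise for $\underline{u}$.
\end{itemize}
Everything happens at a fixed level, so no uniformity of the apertures $\varepsilon(\delta)$ is needed. The hypothesis does not provide any, and the uniformization in your Step~1 can be dropped.

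The genuine gap is the passage from two-dimensional profiles to $\overline{u}\equiv1$, $\underline{u}\equiv-1$. Stability plus the planar classification only says that each profile is constant or one-dimensional. You discard the constant $\kappa$ by asserting that Definition~\ref{dw} makes it unstable, and you never treat a non-constant one-dimensional profile.

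The assertion about $\kappa$ is not justified. $W$ is only $C^{1,1}$ near $\kappa$, and nothing in Definition~\ref{dw} prevents a degenerate maximum with $W''(\kappa)=0$. In that case the constant $\kappa$ satisfies the second-variation inequality. For $p\neq2$, linearizing at a constant is degenerate or singular anyway.

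The paper does not argue through stability here. It applies Theorem~1.1 of \cite{FV}, whose hypothesis is precisely that $\overline{u}$ and $\underline{u}$ are two-dimensional, and which yields $\overline{u}\equiv1$ and $\underline{u}\equiv-1$. Local minimality then follows from monotonicity in $e_n$ with limits $\pm1$. Put that citation in place of the end of your Step~3; your Step~4 is then the paper's final step.
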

In this spirit, we are also able to prove a result that provides a condition under which a solution of (\ref{dwpde}) is energy minimizing.\\
In the following proposition, we denote by $B'_r$ the ball of radius $r>0$ in $\mathbb{R}^{n-1}$ centered in the origin.
 \begin{proposition}\label{n-2dir}
        Let $u \in C^{1}(\mathbb{R}^{n})$be a bounded solution of (\ref{dwpde}) 
        where $W$ is a double-well potential as in definition \ref{DW}, or as in definition \ref{dw}.\\
        Assume that:\\
    \textbf{1)} $\partial_n u >0$ everywhere\\
    \textbf{2)} There exists a $n-2$ cone $C$ and an $r>0$ such that, for every $x$ in the infinite cylinder $K=B'_r \times \mathbb{R}$ we have \\
 $$\partial_{v}u(x) \geq 0 \quad  \text{for every $v \in C$}.$$\\
    Then the solution $u$ is an energy minimizing solution.
    \end{proposition}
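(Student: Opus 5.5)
Since $\partial_n u>0$, the limits $\underline u(x')=\lim_{x_n\to-\infty}u(x',x_n)$ and $\overline u(x')=\lim_{x_n\to+\infty}u(x',x_n)$ exist. By standard elliptic estimates they are bounded solutions of the same equation in $\mathbb{R}^{n-1}$. The classical criterion (see the discussion of the weak form of the conjecture in~\cite{s}, and Lemma~2.1 of~\cite{FV}) says that a monotone solution is an energy minimizer as soon as $\underline u\equiv-1$ and $\overline u\equiv 1$. Indeed, then $u$ is embedded in the foliation $u(x',x_n+t)$, $t\in\mathbb{R}$, which sweeps out everything between the two minimizing constants $\pm1$. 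The usual sliding/calibration argument then gives minimality in every bounded domain. By the Remark after the conjecture (Lemma~2.1 of~\cite{FV}), it is enough to find a single $x'$ with $\lim_{x_n\to\pm\infty}u(x',x_n)=\pm1$. The plan is therefore to show that $\overline u(0)=1$ and $\underline u(0)=-1$, using the cone condition in the cylinder $K$.

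Step 1. The set $C$ spans $\mathrm{span}(e_3,\dots,e_n)$. Since $C$ contains $e_n$ and the vectors $(e_n+\eta e_j)/|e_n+\eta e_j|$ for $j=3,\dots,n-1$ and $|\eta|\le\varepsilon'$, we get $\partial_n u\ge \varepsilon'|\partial_j u|$ in $K$ for $3\le j\le n-1$. Passing to the limit $x_n\to\pm\infty$ along the axis, the functions $\overline u,\underline u$ restricted to $B'_r$ are monotone, in the weak sense, in every direction of the $(n-3)$-dimensional cone obtained by projecting $C$ onto $\mathrm{span}(e_3,\dots,e_{n-1})$. The strength of this step is that the limit profiles carry the same kind of cone monotonicity, but only inside a ball.

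Step 2. This is the main obstacle. We must rule out that $\overline u$ (and likewise $\underline u$) takes a value different from $\pm1$ at the origin. The approach I would try first is to consider the sequence $u_k(x)=u(x',x_n+k)$. It converges in $C^1_{loc}$ to the $x_n$-independent function $\overline u$. Suppose $\overline u(0)\ne 1$; then $\overline u(0)\in(-1,1)$, since $\overline u(0)=-1$ is impossible because $\overline u>u$ and $|u|\le 1$. The stability of $u$ (monotone solutions are stable) passes to $\overline u$ on $\mathbb{R}^{n-1}$. The cone monotonicity from Step 1 then lets us apply the Ambrosio--Cabr\'e/Liouville-type argument of~\cite{ac} in the two remaining free variables $x_1,x_2$ to the ratios $\partial_j\overline u/\partial_v\overline u$. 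This should force $\overline u$ to depend on at most one variable inside $B'_r$, and by unique continuation on all of $\mathbb{R}^{n-1}$. A one-dimensional stable bounded solution is either constant or the heteroclinic. The heteroclinic case would give a stable nonconstant profile independent of $x_n$. This must be excluded using $\overline u\ge u$ together with monotonicity in $e_n$: the vectors in $C$ tilted towards $e_j$ show that $\overline u$ is nondecreasing in $e_j$ wherever it is nonconstant. A sliding argument on level sets should then push $\overline u$ up to a well, i.e.\ to a zero of $W'$. The constant $\kappa$ is unstable because $W''(\kappa)<0$, which contradicts the stability inherited from $u$. Hence $\overline u\equiv 1$, and symmetrically $\underline u\equiv-1$.

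Step 3. Having established the limits at one point, we conclude by Lemma~2.1 of~\cite{FV} (for definition~\ref{dw}), or by the direct foliation argument of~\cite{s} (for definition~\ref{DW}), that $u$ is a local energy minimizer. I expect Step 2 to need the most care, specifically the exclusion of a nontrivial lower-dimensional stable limit profile using only the local cone information in $K$. If the Liouville argument in Step 2 fails for $n-1\ge 3$ free dimensions, the fallback is to use the quantitative bound $\partial_n u\ge\varepsilon'|\nabla_{3..n-1}u|$ to control the energy of $u$ in cylinders $B'_R\times(-R,R)$. The goal of that fallback would be to show directly that the profile cannot be a nonconstant solution with values strictly between the wells.
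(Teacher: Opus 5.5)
There is a genuine gap in Step~2. The idea that replaces it is already in your Step~1, but you did not draw the consequence.

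\textbf{The missing observation.} Under the $C^1_{\rm loc}$ convergence $u(x',x_n+k)\to\overline u$ you have $\partial_n u(x',x_n+k)\to\partial_n\overline u\equiv0$. So your inequality $\partial_n u\ge\varepsilon'|\partial_j u|$ in $K$ becomes $\varepsilon'|\partial_j\overline u|\le 0$, i.e.\ $\partial_j\overline u\equiv0$ in $B'_r$ for $j=3,\dots,n-1$. Put differently, the projection of $C$ onto $\mathrm{span}(e_3,\dots,e_{n-1})$ is a neighbourhood of the origin, not a cone, and monotonicity in all its directions is just constancy. Thus $\overline u$ (and likewise $\underline u$) depends only on $(x_1,x_2)$ in $B'_r$, with no Liouville argument needed.

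The route you propose instead cannot work, for two reasons. The Ambrosio--Cabr\'e Liouville theorem is a global statement: it needs a positive weight on the whole space and a quadratic energy growth bound, so it yields nothing ``inside $B'_r$''. Moreover, your denominators $\partial_v\overline u=\eta\,\partial_j\overline u/|e_n+\eta e_j|$ vanish identically. The paper instead applies unique continuation to $\overline u-\overline u(\cdot+te_j)$. This difference solves $\Delta w=c(x')w$ with $c\in L^\infty$, because $W'$ is Lipschitz, and it vanishes on a small ball. Hence $\overline u$ is a function of $(x_1,x_2)$ on all of $\mathbb{R}^{n-1}$.

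\textbf{The exclusion of a heteroclinic profile.} This part is also unsupported. The tilted vectors of $C$ say nothing about the $(x_1,x_2)$-plane, which is exactly where such a profile would vary; they only give $\partial_j\overline u=0$ for $j\ge3$. The ``sliding argument on level sets'' is never specified. The appeal to $W''(\kappa)<0$ is not justified either: under Definition~\ref{dw}, $W$ is only $C^{1,1}$ near $\kappa$, and under Definition~\ref{DW} there may be further critical points.

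The paper never forces $\overline u\equiv1$ by hand. Once the profiles are known to be two-dimensional, it splits into two cases.
\begin{itemize}
\item For Definition~\ref{dw}, it invokes Theorem~1.1 of~\cite{FV}, which gives $\overline u\equiv1$ and $\underline u\equiv-1$.
\item For Definition~\ref{DW}, it first uses that stable two-dimensional solutions are one-dimensional (Lemma~3.2 of~\cite{ac}). It then applies Proposition~2.10 of~\cite{FV}, which turns one-dimensional profiles directly into minimality of $u$.
\end{itemize}
In the second case a heteroclinic profile never has to be ruled out. So the stronger claim $\overline u(0)=1$ you set out to prove is not needed there, and as written it is left without proof.
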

The note is organized as follows.\\
The subsequent sections are devoted to the proof of Theorem~\ref{n-7derivatives} and of the results concerning monotonicity in $n-2$ directions.\\
In the case of Theorem~\ref{n-7derivatives}, we straightforwardly extend the argument to the setting of non-local phase transitions and to free-boundary problems.\\
In the proof of Theorem~\ref{n-2} and its related variants, we make essential use of several results from~\cite{FV} on limit interfaces, combined with Theorem~\ref{n-7derivatives}. We also discuss some possible extensions of this theorem that we were not able to prove.\\
We conclude the note by establishing an asymptotic property of solutions to (\ref{dwpde}) in $\mathbb{R}^n$.

\section{Proof of Theorem \ref{n-7derivatives} and some applications}
The aim of this section is to prove Theorem~\ref{n-7derivatives} along with some related results.

Recall that for a solution $u$ of an Allen--Cahn type equation, we define the scaled functions as in (\ref{scaled}).

By the classical theorem of Modica and its variants (see~\cite{s,sp}), if $u$ is an energy-minimizing solution of \eqref{dwplap} for the functional \eqref{energyplap}, then, up to a subsequence,
\[
u_\varepsilon \to \chi_E - \chi_{E^c} \quad \text{in } L^1_{\rm loc}(\mathbb{R}^n),
\]
where $E \subset \mathbb{R}^n$ is a minimal perimeter set, and, moreover, each scaled level set $\{u_\varepsilon = t\}$ converges, in the sense of the Hausdorff distance, to $\partial E$.\\
We now state a general version of the flatness theorem contained in~\cite{s,sp} (for a useful reference, see Corollary~7 of~\cite{fvplap}).

\begin{theorem}\label{Savin}
Let $u:\mathbb{R}^n \to \mathbb{R}$ be a bounded entire solution of \eqref{dwplap}, which is energy-minimizing for the functional \eqref{energyplap}.
Let $E \subset \mathbb{R}^n$ be the minimal perimeter set such that
\[
u_\varepsilon \to \chi_E - \chi_{E^c}.
\]
If $E$ is a half-space, then $u$ is a one-dimensional heteroclinic solution.
\end{theorem}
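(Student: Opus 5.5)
The plan is to follow Savin's improvement-of-flatness strategy, as carried out in~\cite{s} for $p=2$ and in~\cite{sp,fvplap} for the $p$-Laplacian. Assuming $E$ is a half-space, say $E=\{x\cdot e>0\}$, the first step is to convert the $L^1_{\rm loc}$ convergence $u_\varepsilon\to\chi_E-\chi_{E^c}$ into a geometric flatness statement for $u$ at large scales. The tool is the uniform density estimates for level sets of minimizers, which upgrade $L^1_{\rm loc}$ convergence to Hausdorff convergence of $\{u_\varepsilon=t\}$ to $\partial E$ on compact sets. After undoing the scaling, this says: for every $\theta>0$ there is a sequence $R_k\to\infty$ such that
\[
\{|u|\le 1-\theta_0\}\cap B_{R_k}\subset\{|x\cdot e|\le \theta R_k\}
\]
for a fixed $\theta_0$ close to $1$. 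In words, the transition layer is $\theta R_k$-flat in $B_{R_k}$.

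The second step is the improvement-of-flatness lemma. If the layer $\{|u|\le 1-\theta_0\}$ lies in a slab of width $\theta l$ in $B_l$, with $\theta\le\theta_1$ and $\theta l\ge \ell(\theta)$ (large compared to the interface width), then in $B_{\eta l}$ it lies in a slab of width $\eta\theta l/2$, possibly with a slightly rotated direction $e'$, $|e'-e|\le C\theta$. I would prove this by contradiction and compactness, in three parts:
\begin{itemize}
\item Rescale vertically by $\theta$; the flat level sets then converge to the graph of a function $w$.
\item Establish a Harnack inequality for flat level sets. Sliding radially symmetric subsolutions and supersolutions of~\eqref{dwplap}, constructed from the one-dimensional profile, against $u$ yields uniform H\"older-type compactness of the rescaled level sets.
\item Show that $w$ is a viscosity solution of the linearized equation, which is Laplace's equation (or its anisotropic analogue for $\Delta_p$). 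It is therefore smooth, hence almost linear in small balls, and this contradicts the failure of improvement.
\end{itemize}
I expect this Harnack/sliding step to be the main obstacle, since it is where energy minimality and the structure of $W$ near the wells are really used. For the present note I would invoke it directly from~\cite{s,sp,fvplap} rather than reprove it.

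The third step is to iterate the improvement from scale $R_k$ down to scale $1$. Starting from flatness $\theta R_k$ in $B_{R_k}$, after $m$ steps the layer is $2^{-m}\theta\,\eta^m R_k$-flat in $B_{\eta^m R_k}$. The iteration continues as long as the width stays above $\ell(\theta_1)$, and the directions $e_m$ form a Cauchy sequence. Consequently, in any fixed ball $B_\rho$ the layer lies in a slab whose width tends to $0$ as $k\to\infty$, since $R_k\to\infty$ and the flatness ratio decays geometrically. Passing to a subsequence of the limiting directions, every level set $\{u=t\}$, $|t|<1$, is a hyperplane orthogonal to a fixed vector $\bar e$. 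Hence $u(x)=g(x\cdot\bar e)$, where $g$ is monotone, bounded and solves $(|g'|^{p-2}g')'=W'(g)$. Minimality excludes the constant solutions in $(-1,1)$ and forces $g(\pm\infty)=\pm1$. Thus $g$ is the heteroclinic, and $u$ is the one-dimensional heteroclinic solution.
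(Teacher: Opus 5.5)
Your overall route is the one the paper relies on. The paper gives no proof of its own; it invokes Savin's improvement of flatness \cite{s} and, for the $p$-Laplacian, Corollary~7 of \cite{fvplap}. Steps~1--2 are acceptable as a sketch that cites those results.

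\textbf{The gap is in Step~3.} Your lemma is stated for the thick layer $\{|u|\le 1-\theta_0\}$, with a lower bound $\theta l\ge \ell(\theta)$ on the absolute width. You stop iterating once the width reaches $\ell(\theta_1)$, a constant independent of $k$. What the iteration actually gives is a ball $B_l$, with $l\to\infty$ as $k\to\infty$, in which the layer lies in a slab of width comparable to $\ell(\theta_1)$. Restricting to a fixed $B_\rho$ does not make this width small, so the claim that ``in any fixed ball $B_\rho$ the layer lies in a slab whose width tends to $0$'' does not follow. The claim is in fact false. If $u(0)=0$, the gradient bound $|\nabla u|\le C$ gives $B_{(1-\theta_0)/C}\subset\{|u|\le 1-\theta_0\}$. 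So this set is never contained in arbitrarily thin slabs, already for the heteroclinic itself. Hence your conclusion that every level set $\{u=t\}$ is a hyperplane is not justified.

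\textbf{How to close the argument.} The missing idea is the one used in the proof of Theorem~2.3 in \cite{s}. Improvement of flatness is applied to a single level set $\{u=t\}$, not to the thick layer. For a single level set, the threshold on the absolute width can be any $\theta_0>0$; the price is a ratio bound $\varepsilon_1(\theta_0)$ that shrinks with $\theta_0$. Along the blow-down sequence the initial ratio $\theta$ can be made as small as you like. So for each $\sigma>0$:
\begin{enumerate}
\item Take $k$ large and iterate until the width of $\{u=t\}$ falls into $[\eta\sigma/2,\sigma)$.
\item The ratio only decreases, so the final radius is at least $\eta\sigma/(2\theta)$, which still tends to infinity.
\item Compactness of the directions then gives $\{u=t\}\subset\{|x\cdot e_\sigma|\le\sigma\}$ in all of $\mathbb{R}^n$.
\end{enumerate}
Letting $\sigma\to 0$ shows that $\{u=t\}$ is a hyperplane, and the rest of your Step~3 goes through. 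If you keep the thick-layer version instead, you only get that the layer lies in a slab of bounded width, i.e.\ uniform limits $\pm1$ as $x\cdot\bar e\to\pm\infty$. You would then need a separate Gibbons-type rigidity theorem to conclude one-dimensional symmetry.
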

We are now ready to prove Theorem~\ref{n-7derivatives}, following ideas similar to those used in the proof of Theorem~\ref{farinabernstein1}.\\
Before proving the theorem we define
\begin{equation}\label{insieme positivo}
E' (\psi):= \{ x' \in \mathbb{R}^{n-1} : \psi(x') \in \mathbb{R} \}.
\end{equation}
Where $\psi : \mathbb{R}^{n-1} \to \overline{\mathbb{R}}$

\begin{proof}[Proof of Theorem~\ref{n-7derivatives}]
It is sufficient to closely follow the proof of the main theorem in~\cite{f}.

It is standard to verify that the level set $\{u=0\}$ can be represented as the graph of a function
\[
\varphi : \mathbb{R}^{n-1} \to \overline{\mathbb{R}}, \quad \partial_i\varphi(x') >k \quad \text{for all} \quad x' \in E'(\varphi) \quad \text{for all} \quad i \in \{8,\dots n-1\}
\]
with $\varphi \in C^{1}(E'(\varphi))$, and we denote by $S$ its subgraph.

Without loss of generality we can suppose that $0 \in \{u=0\}$.\\
We consider the blow-down sequence $\varphi_\varepsilon(x') := \varepsilon \varphi(x'/\varepsilon)$.
For each $\varphi_\varepsilon$ we define the corresponding subgraph $S_\varepsilon$.
It is standard to verify that for each $\varepsilon >0$ we have that $$\partial_i \varphi_\varepsilon(x') > k \quad \text{for all} \quad x' \in E'(\varphi_\varepsilon) \quad \text{,for all} \quad i \in \{8,\dots n\}$$

Using the local monotonicity assumption along $e_i$ for all $i \in {8,...n}$, it's natural to verify that for each $S_\varepsilon $ we have that $S_\varepsilon - e_i \subseteq S_\varepsilon$.

By Modica’s convergence theorem and its variants contained in ~\cite{sp,fvplap} , the sets $S_\varepsilon$ converge, up to a subsequence, in $L^1_{\mathrm{loc}}$, to a minimal perimeter set $E$ such that $E- e_i \subseteq E$  for all $i \in {8,...n}$.

Performing a second blow down on the minimal perimeter set $E$ we obtain the sequence $E_\varepsilon$, and, following the standard procedure as in~\cite{g} , we obtain a minimal cone $E*$ such that $E_\varepsilon \to E*$ in $L^1_{\mathrm{loc}}$ and $E*- e_i \subseteq E*$  for all $i \in {8,...n}$.

Therefore, Proposition~2.1 in~\cite{f} applies and yields that $E*$ is a cylinder in the $e_i$ direction for all $ i \in \{8 \dots n\} $.

Hence,
\[
E* = M \times \mathbb{R}^{n-7},
\]
where $M$ is a minimal perimeter set in $\mathbb{R}^7$.
By the  classification of minimal cones in dimension $7$ in~\cite{g} , $M$ must be a half-space.
Therefore, $E*$ and $E$ itself is a half-space.

Finally, we apply Savin’s theorem, Theorem \ref{Savin} to conclude that $u$ is the one-dimensional heteroclinic solution.
\end{proof}

The previous result can be stated also in the non local setting, with some small differences in the statement.
\begin{definition}[Notation about the problem in the non local setting]
  The nonlocal Allen--Cahn energy functional in the fractional setting is given by
\begin{equation} \label{fractionalminimizer}
\mathcal{E}_s(u)
=
\frac{1}{4}
\iint_{\mathbb{R}^{2n}}
\frac{|u(x)-u(y)|^2}{|x-y|^{n+2s}}\,dx\,dy
+
\int_{\mathbb{R}^n} W(u(x))\,dx,
\end{equation}
where $s\in(0,1)$, and $W$ is a double-well potential as in Definition~\ref{dw}.\\
We can directly define the scaled functional
\[
\mathcal{E}^\varepsilon_s(u)
=
\frac{1}{4}
\iint_{\mathbb{R}^{2n}}
\frac{|u(x)-u(y)|^2}{|x-y|^{n+2s}}\,dx\,dy
+
\varepsilon^{-s}\int_{\mathbb{R}^n} W(u(x))\,dx,
\]
It is associated with solutions of the PDE 

\begin{equation}\label{fractionalpde}
(-\Delta u)^s+W'(u)=0
\end{equation}
\end{definition}
The correspondent of the Modica Theorem can be proved in this setting, as shown in Theorem~1.3 of ~\cite{SavinValdinoci2012}.\\
It is known, from ~\cite{DipierroSerraValdinoci2016,s1,s2},  that the analogue of Savin's flatness theorem remains valid in the fractional framework and that the blow-down limit of the scaled solutions is a $s$-minimal cone.
In particular, we can state this general result.
\begin{proposition}
For every $j \leq 7$ there exists $s(j) \leq \frac{1}{2}$ such that ,
if $u \in C^1(\mathbb{R}^n), |u| \leq 1$ is an energy minimizer of \eqref{fractionalminimizer} and a solution of \eqref{fractionalpde} with $s(j)\leq s \leq 1$ ,whose level set $\{u=0\}$ enjoys $n-j$ directions of monotonicity , then $u$ is the one-dimensional heteroclinic solution.
\end{proposition}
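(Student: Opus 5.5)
We mirror the proof of Theorem~\ref{n-7derivatives}, replacing each local ingredient with its fractional analogue. The threshold $s(j)$ comes from the regularity theory of $s$-minimal cones: for $s$ close to $1$, $s$-minimal cones in $\mathbb{R}^j$ with $j\le 7$ are half-spaces. This is the Caffarelli--Valdinoci asymptotic result, which yields an $s(j)$ for each $j\le 7$, combined with the Savin--Valdinoci classification in dimension $2$ for all $s$. We fix $s(j)$ accordingly and take $s\in[s(j),1)$.

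\textbf{Step 1.} Up to a rotation, assume $\partial_i u>0$ on $\{u=0\}$ for $i\in\{j+1,\dots,n\}$ and $0\in\{u=0\}$. Write the level set as the graph of $\varphi:\mathbb{R}^{n-1}\to\overline{\mathbb{R}}$ with $\partial_i\varphi>k$ on $E'(\varphi)$, and let $S$ be its subgraph. The monotonicity gives a translation inclusion $S-e_i\subseteq S$ for the relevant $i$, after the same normalization used in the local proof.

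\textbf{Step 2.} Consider the blow-downs $u_\varepsilon(x)=u(x/\varepsilon)$. These minimize $\mathcal{E}^\varepsilon_s$ (for $s\ge 1/2$ one uses the appropriate rescaling in $\varepsilon$). The inclusion $S_\varepsilon-e_i\subseteq S_\varepsilon$ is preserved. By the fractional $\Gamma$-convergence result, Theorem~1.3 of~\cite{SavinValdinoci2012}, a subsequence converges in $L^1_{\mathrm{loc}}$ to $\chi_E-\chi_{E^c}$. Here $E$ is a minimizer of the limiting perimeter: the classical perimeter if $s\ge1/2$, and the $s$-perimeter if $s<1/2$. In either case $E-e_i\subseteq E$ is preserved, because $L^1_{\mathrm{loc}}$ limits preserve set inclusions. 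The level sets converge locally uniformly via the density estimates in~\cite{s1,s2}.

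\textbf{Step 3.} Blow down $E$ once more and use the appropriate monotonicity formula. This is the classical one for $s\ge1/2$, and the Caffarelli--Roquejoffre--Savin extension formula for $s$-minimal sets when $s<1/2$. The result is a cone $E^*$, again satisfying $E^*-e_i\subseteq E^*$. Proposition~2.1 of~\cite{f} then shows $E^*$ is invariant in these directions. Its proof uses only that a cone contained in its own translate must be invariant, together with minimality to rule out strict inclusion. In the nonlocal case we redo this via the strong comparison principle for $s$-minimal sets: $E^*$ and $E^*-te_i$ are both minimizers, one contains the other, and they touch at the vertex, so they coincide. Hence $E^*=M\times\mathbb{R}^{n-j}$.

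\textbf{Step 4.} Dimensional reduction shows that $M$ is an $s$-minimal cone in $\mathbb{R}^j$; for the nonlocal perimeter this is the Caffarelli--Roquejoffre--Savin lemma. By the choice of $s(j)$, $M$ is a half-space, so $E^*$ is a half-space. Monotonicity/density rigidity, via the extension monotonicity formula with equality at a flat cone, then makes $E$ itself a half-space. The fractional flatness theorem of~\cite{DipierroSerraValdinoci2016} (the analogue of Theorem~\ref{Savin}) gives one-dimensional symmetry of $u$.

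\textbf{Main obstacle.} The delicate step is Step 3 in the case $s<1/2$. There the nonlocal interaction prevents a literal transcription of Farina's cylinder argument, and we must verify that the strong maximum principle for $s$-minimal sets applies to translated cones that touch only at infinity or at the vertex. I would try first to work through the Caffarelli--Roquejoffre--Savin extension, where the problem becomes local and weighted, and apply Farina's argument there.
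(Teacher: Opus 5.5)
Your skeleton matches the paper's proof: blow-down, the $L^1_{\mathrm{loc}}$ limit inheriting $S_\varepsilon-e_i\subseteq S_\varepsilon$, splitting off $\mathbb{R}^{n-j}$, classifying cones in $\mathbb{R}^j$ for $s\ge s(j)$, and applying the flatness theorems of \cite{DipierroSerraValdinoci2016,s1,s2}. Separating $s\ge 1/2$ (classical perimeter, where the local argument goes through) from $s<1/2$ is more accurate than the paper, which calls the limit a $2s$-minimal cone throughout.

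\textbf{The gap is Step~3 for $s<1/2$.} This is exactly where the paper does not argue but cites the nonlocal splitting result, Proposition~3.1 of \cite{FCV}. The comparison argument you offer in its place fails for three reasons.
\begin{enumerate}
\item Its conclusion $E^*-te_i=E^*$ is false in general. A half-space $\{x\cdot\nu<0\}$ with $\nu\cdot e_i>0$ is an $s$-minimal cone with $E^*-te_i=\{x\cdot\nu<-t\,\nu\cdot e_i\}\subsetneq E^*$. This is precisely the blow-down you expect when $u$ is a heteroclinic in a tilted direction. So minimality cannot rule out strict inclusion, and your summary of Farina's argument is wrong on this point.
\item The two sets need not touch at the vertex. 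Having $0\in\partial(E^*-te_i)$ requires $te_i\in\partial E^*$, and nothing forces this, nor $-te_i\in\partial E^*$.
\item Even when they do touch at a vertex, that point is singular unless the cone is flat. The viscosity comparison principle for $s$-minimal sets needs tangent balls at the contact point, so it cannot be invoked there.
\end{enumerate}

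\textbf{What is actually needed} is a dichotomy: an $s$-minimal cone with $E^*-te_i\subseteq E^*$ for all $t>0$ is either a half-space or invariant under translations along $e_i$. Iterating over $i=j+1,\dots,n$ then gives either a half-space directly or $E^*=M\times\mathbb{R}^{n-j}$, and your Step~4 proceeds. Proving the dichotomy takes more than your sketch.
\begin{itemize}
\item If the translates are strictly nested, $\partial E^*$ is a (generalized) graph in the $e_i$-direction, and you need a Bernstein-type result for graphical nonlocal minimal cones.
\item If they touch, the comparison must be carried out at contact points that may lie in the singular set.
\item Alternatively, you can run a maximum principle for the linearized nonlocal mean curvature acting on $\nu\cdot e_i\ge 0$.
\end{itemize}
This is the content of the result the paper cites (read as this dichotomy), and you should either cite it or prove it.

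\textbf{Two smaller points.}
\begin{itemize}
\item For $s<1/2$ the limit is the $2s$-perimeter. The Caffarelli--Valdinoci threshold ``close to $1$'' therefore concerns $2s$, which gives $s(j)\le 1/2$. As you wrote it, with $s$-perimeters and $s$ close to $1$, the case $s<1/2$ would be empty.
\item The density-rigidity step in Step~4 is unnecessary. By a diagonal argument $E^*$ is itself a blow-down limit of $u$, so the flatness theorem applies to it directly.
\end{itemize}
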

\begin{proof}
    As in the previous proof, we suppose without loss of generality that $0 \in \{u=0\}$ and consider the blow-down sequence $u_\varepsilon$, defined as in \eqref{scaled}, which, according to the results in ~\cite{SavinValdinoci2012,DipierroSerraValdinoci2016}, converges in $L^1_{loc}$ to $\chi_E-\chi_{E^c}$ where $E$ is a $2s$ minimal cone.\\
    Following line by line the previous proof, we have that for every subgraph $S_\varepsilon$ of the blow-down sequence, $S_\varepsilon - e_i \subseteq S_\varepsilon$ for all $i \in {j+1 \dots n}$ and therefore the limit $2s$-minimal cone $E$ enjoys this property.\\
    We can now apply Proposition~3.1 in~\cite{FCV} (which is the analogue of Proposition~2.1 in~\cite{f}), proving that the set $E$ is a cylinder along every $e_i$ for all $i \in {j+1 \dots n}$ and therefore it takes the form:
    $$E=M \times \mathbb{R}^{n-j}$$
    where $M$ is a $2s$ minimal perimeter set in $\mathbb{R}^j$.\\
    Reasoning as in the previous proof and using the standard classification of $s$-minimal perimeter sets contained in ~\cite{FCV}, we prove that the set $E$ is an hyperplane. To conclude the proof it suffices to apply the improvement of flatness theorems in the different cases , respectively Theorem~1.2 of ~\cite{DipierroSerraValdinoci2016}, Theorem~1.1 of ~\cite{s1} and Theorem~1.1 of ~\cite{s2}.
\end{proof}

A simple variant of the previous results is an extension to free boundary problems, providing additional conditions in higher dimensions to establish one-dimensional symmetry for solutions of
\begin{equation}\label{serrin}
\begin{cases}
\Delta u = W'(u) & \text{in } \Omega, \\[2mm]
u > 0, \quad \partial_n u > 0 & \text{in } \Omega,\\[1mm]
u = 0 & \text{on } \partial \Omega,\\[1mm]
|\nabla u| = \text{const} & \text{on } \partial \Omega.
\end{cases}
\end{equation}

Problems of this type were raised in~\cite{fb}.

\begin{theorem}
Let $u$ be a solution of \eqref{serrin}, and suppose that $\Omega$ is the epigraph of a smooth function
\[
\varphi : \mathbb{R}^{\,n-1} \to \mathbb{R},
\]
with $\partial_i \varphi$ bounded from below (or above, without loss of generality) for all $i \in \{8,\dots,n\}$. 

Then $\varphi$ must be affine, and $u$ is the one-dimensional heteroclinic solution.
\end{theorem}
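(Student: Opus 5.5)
The plan is to follow the blueprint of the proof of Theorem~\ref{n-7derivatives}, replacing Savin's flatness theorem with the rigidity theory for overdetermined problems of Serrin type in epigraphs, as developed in~\cite{fb}. The key input from~\cite{fb} is that a monotone solution of \eqref{serrin} in an epigraph is, in the relevant regime, a minimizer of a one-phase (Alt--Caffarelli type) energy for $u$ in $\Omega$. Consequently, blow-downs of $\partial\Omega$, that is, of the graph of $\varphi$, converge to the boundary of a minimal perimeter set, or more generally to a one-phase minimal cone. The free boundary $\partial\Omega=\{u=0\}$ then plays the role that the level set $\{u=0\}$ played in the proof of Theorem~\ref{n-7derivatives}.

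\textbf{Steps.}
\begin{enumerate}
\item Translate so that $0\in\partial\Omega$.
\item Consider the blow-downs $\varphi_\varepsilon(x')=\varepsilon\varphi(x'/\varepsilon)$. These satisfy the same lower bounds $\partial_i\varphi_\varepsilon>k$ for $i\ge 8$. Hence the epigraphs $\Omega_\varepsilon$ satisfy $\Omega_\varepsilon+e_i\subseteq\Omega_\varepsilon$ for $i\in\{8,\dots,n-1\}$, after replacing $e_i$ by the tilted direction $e_i-k e_n$ (or $e_i+|k|e_n$), exactly as in~\cite{f}. Trivially $\Omega_\varepsilon+e_n\subseteq\Omega_\varepsilon$.
\item By the compactness results of~\cite{fb}, which are the Modica-type convergence in this setting, pass to a subsequence with $\Omega_\varepsilon\to E$ in $L^1_{\rm loc}$. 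Here $E$ is a minimal perimeter set (or a minimizing one-phase cone) that inherits the translation inclusions.
\item Blow down once more to obtain a cone $E^*$ with the same inclusions in $n-7$ linearly independent directions.
\item Apply Proposition~2.1 of~\cite{f} to conclude that $E^*=M\times\mathbb{R}^{n-7}$, with $M$ a minimal cone in $\mathbb{R}^7$. By~\cite{g}, $M$ is a half-space, so $E^*$ is a half-space. By monotonicity of the density and Allard/De Giorgi regularity, $E$ is also a half-space.
\end{enumerate}

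To conclude, I would use the flatness result for \eqref{serrin}, the analogue of Theorem~\ref{Savin} proved in~\cite{fb}. If the blow-down of $\partial\Omega$ is a hyperplane, then $\partial\Omega$ is a hyperplane, so $\varphi$ is affine. Then $u$ depends only on the normal variable and solves the ODE with the overdetermined condition, so $u$ is the one-dimensional (half-)heteroclinic profile.

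\textbf{Main obstacle.} The delicate point is the minimality and compactness step. System \eqref{serrin} does not a priori come from a minimization problem, so I must show that the blow-down limit of $\partial\Omega$ is a minimal (or stable, one-phase minimizing) boundary to which Proposition~2.1 of~\cite{f} and the classification in dimension $7$ apply. I would first try to derive minimality from the monotonicity $\partial_n u>0$ via the standard sliding/foliation argument: the translates $u(x+te_n)$ give a calibration-type foliation, and the epigraph structure lets the argument of~\cite{fb} go through. A secondary issue is the lack of a global bound on $\partial_i\varphi$ from above, which could make the limit a non-graphical set, for example with vertical pieces. This is handled, as in~\cite{f}, by working only with the set inclusions and never with graph regularity.
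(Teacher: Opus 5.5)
Your proposal is correct at the level of the paper's argument and follows it essentially step for step: minimality of $u$ for the associated one-phase energy, Modica-type compactness of the blow-downs to a minimal cone that inherits the $n-7$ translation inclusions, Proposition~2.1 of \cite{f} together with the classification of minimal cones in $\mathbb{R}^7$, and the reasoning of Theorem~2.1 of \cite{fb} to pass from a flat blow-down to a half-space; the one difference is that the paper simply asserts the minimality and takes compactness and the monotonicity formula from \cite{ww}, whereas you flag these as the main obstacle. Two small fixes: for $\partial_i\varphi>k$ the inclusion should read $\Omega-t(e_i+ke_n)\subseteq\Omega$ for $t>0$ (your tilted directions have the wrong sign), and you should drop the alternative of a one-phase minimizing cone, since the Allen--Cahn scaling produces a perimeter minimizer, and only for perimeter minimizers is the classification in $\mathbb{R}^7$ available.
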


\begin{proof}
   We closely follow the proof of Theorem 2.1 in~\cite{fb}.

Without loss of generality, we may assume that $0 \in \partial \Omega$.

Moreover, $u$ is an energy minimizer for the functional
\[
\int \frac{1}{2} |\nabla u|^2 + W(u) \, \chi_{\{u \ge 0\}} \, dx.
\]

We consider the scaled solutions of \eqref{serrin}, $u_\varepsilon$. defined as \eqref{scaled}, solving
\begin{equation}\label{serrinepsilon}
\begin{cases}
\varepsilon\Delta u = \frac{1}{\varepsilon}W'(u) & \text{in } \Omega_\varepsilon, \\[2mm]
u_\varepsilon > 0, \quad \partial_n u_\varepsilon > 0 & \text{in } \Omega_\varepsilon,\\[1mm]
u_\varepsilon = 0 & \text{on } \partial \Omega_\varepsilon,\\[1mm]
|\nabla u_\varepsilon| = \text{const} & \text{on } \partial \Omega_\varepsilon.
\end{cases}
\end{equation}
Where $\Omega_\varepsilon$ is the epigraph of the blow-down sequence 
\[
\varphi_\varepsilon(x') := \varphi\Big(\frac{x'}{\varepsilon}\Big),
\]
From~\cite{ww}, we know that
\[
u_\varepsilon \to \chi_E \quad \text{in } L^1_{\rm loc},
\]
where $E \subset \mathbb{R}^n$ is a set of minimal perimeter, which can be proved to be a minimal cone. To prove this it is sufficient to reason as in Proposition 11.2 in ~\cite{w}.\\
From Proposition 2.4 and Lemma 3.15 in ~\cite{ww} the monotonicity formula and the energy bound remain valid in this setting, therefore the limit set $E$ is a minimal cone.

As before we consider the blow-down limit (up to a subsequence) of
$\varphi_\varepsilon(x') := \varphi\Big(\frac{x'}{\varepsilon}\Big),$ 
whose subgraphs converge to the minimal perimeter set $E$, which is a minimal cone.

We can now repeat the argument used in the proof of Theorem~\ref{n-7derivatives} to prove that $E$ is an half space.
Repeating the reasoning adopted in Theorem~2.1 of ~\cite{fb}, we prove that $\Omega$ is an half space and $u$ is the one dimensional heteroclinic solution.
\end{proof}
\section{Proof of Theorem \ref{n-2}}
We state the following simple proposition, which also appears in~\cite{ww}, Lemma~5.1.

\begin{proposition}\label{exponential estimates}
Let $u$ be a bounded entire solution of \eqref{dwpde}, where $W$ is a double-well potential as in Definition~\ref{dw} with $p=2$.
Let $\{u=\kappa\}$ be a level set and define the function $\mathrm{dist}:\mathbb{R}^n \to \mathbb{R}$ as the distance from $\{u=\kappa\}$.
Then
\[
\lim_{\mathrm{dist}(x)\to\infty} |u(x)| = 1.
\]
\end{proposition}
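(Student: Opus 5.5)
The plan is a compactness argument by contradiction, followed by a Liouville-type statement for solutions lying on one side of $\kappa$. Suppose the claim fails. Then there are $\eta>0$ and points $x_k$ with $d_k:=\mathrm{dist}(x_k)\to\infty$ and $|u(x_k)|\le 1-\eta$. The open ball $B_{d_k}(x_k)$ does not meet $\{u=\kappa\}$ and is connected, so $u-\kappa$ has a constant sign on it. Passing to a subsequence, and replacing $u$ by $-u$ and $W(s)$ by $W(-s)$ if needed, we may assume $u>\kappa$ on $B_{d_k}(x_k)$ for every $k$.

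\textbf{Step 1 (limit profile).} Set $u_k(x):=u(x+x_k)$. These functions are bounded by $1$ and $W'$ is locally Lipschitz since $W\in C^{1,1}_{loc}$. Interior elliptic estimates therefore bound $u_k$ in $C^{2,\alpha}_{loc}$, and a subsequence converges in $C^2_{loc}(\mathbb{R}^n)$ to a bounded entire solution $v$ of \eqref{dwpde} with
\[
\kappa\le v\le 1 \ \text{ in } \mathbb{R}^n,\qquad v(0)\le 1-\eta .
\]
It then suffices to prove the Liouville statement: \emph{any entire solution with values in $[\kappa,1]$ is either $\equiv\kappa$ or $\equiv1$}. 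We also need to exclude $v\equiv\kappa$, or better, to bypass that case by working with $u$ directly on the large balls $B_{d_k}(x_k)$.

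\textbf{Step 2 (sliding a compactly supported subsolution).} From Definition~\ref{dw}, $W'$ vanishes only at $-1,\kappa,1$ and $W'(1-\theta)<0$ for small $\theta$, so $W'<0$ on $(\kappa,1)$. Fix $R$ large and build on $B_R$ a subsolution $w_R$ with
\[
\Delta w_R\ge W'(w_R)\ \text{ in } B_R,\qquad w_R=\kappa \ \text{ on }\partial B_R,\qquad \kappa\le w_R<1,
\]
such that $\sup_{B_{R/2}} w_R\to 1$ as $R\to\infty$. A natural choice is a minimizer of $E(\cdot,B_R)$ among functions in $[\kappa,1]$ equal to $\kappa$ on $\partial B_R$. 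The competitor which equals $1$ away from a unit layer near $\partial B_R$ costs $O(R^{n-1})$, while any function staying below $1-\eta/2$ on a large portion of $B_R$ costs $\gtrsim R^n$. Together with Step 1 compactness for minimizers, this shows that $w_R$ is close to $1$ on $B_{R/2}$ for $R$ large.

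We then slide the family $w_R(\cdot-y)$, with $B_R(y)\subset B_{d_k}(x_k)$, together with a continuous deformation $w_R^{\lambda}$, $\lambda\in[0,1]$, from $w_R^0\equiv\kappa$ to $w_R^1=w_R$. On $\partial B_R(y)$ we have $u>\kappa=w_R^\lambda$, and $u$ is a solution while $w_R^\lambda$ is a subsolution. The strong maximum principle then forbids a first interior touching point, which yields $u\ge w_R^1(\cdot-y)$. Taking $y=x_k$ and $R\le d_k$ large gives $u(x_k)\ge \sup_{B_{R/2}}w_R-o(1)>1-\eta$, a contradiction.

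\textbf{Main obstacle.} The delicate point is constructing the deformation $w_R^\lambda$ so that every member is a subsolution. The first thing to try is the eigenfunction barrier $\psi_\tau=\kappa+\tau\phi_1$, where $\phi_1$ is the first Dirichlet eigenfunction of $B_R$ with eigenvalue $\lambda_R\sim R^{-2}$, together with its translates. This works as soon as
\[
W'(\kappa+t)\le -\lambda_R t \quad\text{for small } t>0,
\]
which holds for large $R$ when $W''(\kappa)<0$.

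In the degenerate case $W''(\kappa)=0$ I would instead use the ordered family of constrained minimizers $w_{R}$, indexed continuously by $R$ (monotone in $R$ by comparison), so that the sliding parameter is the radius itself. Alternatively, the monotone-iteration solution between the subsolution $\kappa$ and the supersolution $u$ could serve, with its non-triviality obtained from the energy comparison of Step 2. Once this barrier family is available, the remaining steps are routine.
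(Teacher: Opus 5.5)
Your argument has a genuine gap exactly where you place the ``main obstacle'', and the gap cannot be closed. When $W'$ vanishes to higher order at $\kappa$, both the Liouville claim of Step~1 and the local statement of Step~2 are false. The local statement is: if $u>\kappa$ on $B_d(x)$ with $d$ large, then $u(x)$ is close to $1$. Here is an example. Take $n=4$, $\kappa=0$, and an odd $W'$ with $W'(s)=-s^3$ for $|s|\le\frac12$, $W'(s)=s^3-s$ for $|s|\ge\frac34$, and $W'<0$ on $(0,1)$. Then $W(s)=\int_{-1}^s W'$ satisfies Definition~\ref{dw} with $p=2$, since nothing there constrains $W$ near $\kappa$. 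For small $\lambda>0$,
\[
v(x)=\frac{2\sqrt2\,\lambda}{1+\lambda^2|x|^2}
\]
solves $-\Delta v=v^3$, i.e.\ $\Delta v=W'(v)$ in $\mathbb{R}^4$, with $0<v\le 2\sqrt2\,\lambda$. (For $n\ge5$ use small radial supercritical Lane--Emden solutions; for $n=3$ take $W'(s)=-s^5$ near $0$.) So $v$ is a nonconstant entire solution with values in $[\kappa,1]$, and it is $>\kappa$ on every ball while $v(0)$ stays near $\kappa$. Since $\{v=\kappa\}=\emptyset$, this does not directly refute the proposition for a nonempty level set. It does refute the purely local statement your proof is built to establish.

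The example also shows that no barrier family of the kind you seek can exist. The function $v$ meets every hypothesis used in your sliding step, so a continuous family of subsolutions on a ball, equal to $\kappa$ on the boundary and running from $\kappa$ to a function near $1$ at the center, would force $v(0)$ near $1$. Your two proposed devices fail accordingly:
\begin{itemize}
\item The constrained minimizers $w_\rho$ are $\equiv\kappa$ for small $\rho$ by Poincar\'e, and near $1$ for large $\rho$. Monotonicity in $\rho$ therefore comes with a jump, and first-touching needs continuity in the parameter.
\item Monotone iteration started at the subsolution $\kappa$ simply returns $\kappa$.
\item The energy comparison says something about minimizers, not about solutions trapped between $\kappa$ and $u$.
\end{itemize}

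The paper gives no argument here beyond pointing to Lemma~5.1 of~\cite{ww}. What your route does prove is the proposition under a nondegeneracy hypothesis at $\kappa$: $W'(\kappa+t)\le -a t$ and $W'(\kappa-t)\ge a t$ for $0<t\le t_0$. This holds for the classical potential, where $\kappa=0$ and $W''(0)=-1$. In that case the eigenfunction family alone finishes the proof, and the minimizers $w_R$ are not needed. Since $W'<0$ on $(\kappa,1)$, nondegeneracy upgrades to $W'(\kappa+t)\le -c_\eta t$ on all of $[0,1-\kappa-\eta]$. So once $\lambda_R\le c_\eta$, every $\kappa+\tau\phi_1$ is a subsolution on $B_R(x_k)$, where $\phi_1(x_k)=\max\phi_1=1$ and $0\le\tau\le1-\kappa-\eta$. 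Sliding in $\tau$ then gives $u(x_k)\ge 1-\eta$ directly. Touching can only occur at values $\le 1-\eta$, where $W'$ is Lipschitz, so the strong maximum principle applies. You should state this extra hypothesis explicitly, or some substitute excluding small positive entire solutions of $\Delta w=W'(\kappa+w)$. Without it, the local mechanism behind the proposition breaks down in every dimension $n\ge3$.
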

We shall make use of Theorem~1.1 in~\cite{FV}.
We now begin with the proof of Theorem~\ref{n-2} and then model the proofs of the remaining propositions on this argument.
   \begin{proof}[Proof of Theorem~\ref{n-2}]
We show that both $\overline{u}$ and $\underline{u}$, which are well defined since $u$ is monotone along $e_n$, are $2$-dimensional solutions of $\Delta v = W'(v)$, and then apply Theorem~1.1 in~\cite{FV} to deduce that the solution is energy minimizing.
We prove the statement for $\overline{u}$; the argument for $\underline{u}$ is analogous.\\
Assume that there exists a sequence $x_h=(x'_h,x^n_h)\in\{u=\kappa\}$ such that $x_h^n\to +\infty$ (respectively $-\infty$) and $x'_h\to\xi'\in\mathbb{R}^{n-1}$.
In all remaining cases, by Proposition~\ref{exponential estimates}, we have $\overline{u}\equiv 1$ (respectively $\underline{u}\equiv -1$).\\
For such a sequence, define
\[
u_h(x',x_n):=u(x',x_n+x_h^n),
\]
which solves $\Delta u_h=W'(u_h)$ and converges in $C^1_{\mathrm{loc}}$ to $\overline{u}$ by the Arzelà--Ascoli theorem.\\
The limit function $\overline{u}=\lim_h u_h$ is an $(n-1)$-dimensional solution of $\Delta v=W'(v)$, which can also be regarded as a function on $\mathbb{R}^n$.\\
By standard elliptic estimates (see~\cite{FV}), the gradient of $u_h$ is uniformly bounded. Hence, there exists $R>0$ such that for all $h$ large enough we have that:
\[
\kappa-\frac{\gamma}{2}<u_h<\kappa +\frac{\gamma}{2}
\quad \text{in } B_R:=B((\xi',0),R),
\]
Viewing $\overline{u}$ as a function on $\mathbb{R}^n$, we have $\partial_n \overline{u}\equiv 0$.
By local $C^1$ convergence, $\partial_n u_h\to 0$ uniformly in $B_R$.
We know that $e_n$ is a convex combination of vectors in the cones $C(\delta)$ for all $\delta : \kappa - \gamma <\delta < \kappa + \gamma$.
As a simple consequence we deduce that $\partial_v u_h(x)\ge 0$ for all $x \in B_R$ such that $u_h(x)=\delta$ and for all $v\in C(\delta)$.\\
We also have that, from $C^1$ uniform convergence, for all $x \in B_R$ , $|\overline{u}(x)| < \delta$ and therefore there exists an $n-2$ dimensional cone $C(x)$ such that $\partial_v\overline{u}(x)=0$ for all $v \in C(x)$.\\
This implies that 
\[
\partial_i \overline{u}\equiv 0 \quad \text{in } B((\xi',0),R)
\quad \text{for all } i=3,\dots,n.
\]
Since $\overline{u}$ is $(n-1)$-dimensional and satisfies $\partial_n \overline{u}\equiv 0$, we may work in $B'_R:=B'(\xi',R)\subset\mathbb{R}^{n-1}$.
Thus, $\overline{u}$ depends only on $x_1,x_2$ in $B'_R$.\\
We now apply the unique continuation principle.
Since $B'_R$ is open, there exists $t>0$ and a ball $B''\subset B'_R$ such that, for every $i=3,\dots,n-1$, we have $B''+t e_i\subset B'_R$.
Define
\[
\overline{u}_i^t(x'):=\overline{u}(x'+t e_i).
\]
Then $\overline{u}-\overline{u}_i^t$ satisfies
\begin{equation}\label{formalip}
\Delta(\overline{u}-\overline{u}_i^t)
= c(x')(\overline{u}-\overline{u}_i^t),
\quad c\in L^\infty(\mathbb{R}^{n-1}),
\end{equation}
by the Lipschitz continuity of $W'$.\\
Since $\overline{u}-\overline{u}_i^t\equiv 0$ in $B''$, the unique continuation principle applied to~\eqref{formalip} yields $\overline{u}-\overline{u}_i^t\equiv 0$ in $\mathbb{R}^{n-1}$.
Repeating this argument for all $i=3,\dots,n-1$, we conclude that $\overline{u}$ depends globally only on $x_1$ and $x_2$, hence it is a $2$-dimensional solution.\\
Therefore, both $\overline{u}$ and $\underline{u}$ are $2$-dimensional, and by Theorem~1.1 of~\cite{FV} we obtain $\overline{u}\equiv 1$ and $\underline{u}\equiv -1$.
This implies the local minimality of $u$, since $u$ is monotone in the $n$-direction with limits $\pm 1$.
Finally, since there exist $n-2$ linearly independent directions of monotonicity on $\{u=\kappa\}$, we apply Theorem~\ref{n-7derivatives} to conclude that $u$ is the one-dimensional heteroclinic solution.
\end{proof}
We can now prove Proposition~\ref{n-2dirp}.
Here we assume monotonicity on $n-2$ cones everywhere, since the unique continuation principle cannot be applied to the general $p$-Laplace operator.

\begin{proof}[Proof of Proposition~\ref{n-2dirp}]
In this case, we observe that $\overline{u}$ is an $(n-1)$-dimensional solution of
\[
\Delta_p v = W'(v),
\]
and that $\partial_n \overline{u} \equiv 0$.
We may have either $\overline{u} \equiv 1$ or $-1 < \overline{u} < 1$. This is a simple consequence of Lemma~2.1 of ~\cite{FV}.\\
It is therefore enough to consider the latter case.\\
We claim that for every open set $B' \subset \mathbb{R}^{n-1}$ we have
\[
\partial_i \overline{u} \equiv 0 \quad \text{in } B' \quad \text{for all} \quad i \in \{3,\dots n\}
\]
Indeed, for each such $B'$ there exist
\[
-1 < \min_{B'} \overline{u} < \max_{B'} \overline{u} < 1,
\]

Arguing as in the proof of the previous theorem ,we define the sequence $u_h(x)=u(x+he_n)$ which converges in $C^1_{loc}$ to $\overline{u}$.
The monotonicity along $n-2$ dimensional cones yields that:
\[
\partial_i \overline{u} \equiv 0 \quad \text{in } B' \quad \text{for all} \quad i \in \{3,\dots n\}
\]
for every $B' \subset \mathbb{R}^{n-1}$.
Therefore, $\overline{u}$ depends only on two variables and is thus a $2$-dimensional solution.\\
The same reasoning applies to $\underline{u}$, and consequently, by Theorem 1.1 of ~\cite{FV}, we obtain
\[
\overline{u} \equiv 1
\quad \text{and} \quad
\underline{u} \equiv -1.
\]
Finally, as in the previous theorem, we apply Theorem~\ref{n-7derivatives} to deduce the one-dimensional symmetry of the solution.
\end{proof}

We can now prove the general statement of Proposition \ref{n-2dir}.

\begin{proof}[Proof of Proposition~\ref{n-2dir}]
We consider $\overline{u}$ in $B'_r$ and observe that $\overline{u}$ depends only on two variables in $B'_r$.
Using the same argument based on the unique continuation principle as in the proof of Theorem~\ref{n-2}, we deduce that both $\overline{u}$ and $\underline{u}$ are $2$-dimensional solutions.\\
In the case where $W \in C^{1,1}_{\mathrm{loc}}$ and $W$ is as in Definition~\ref{dw}, we can directly repeat the argument of Theorem \ref{n-2dir}, obtaining
\[
\overline{u} \equiv 1
\quad \text{and} \quad
\underline{u} \equiv -1,
\]
and hence concluding that $u$ is energy minimizing.\\
In the remaining case, corresponding to a potential as in Definition~\ref{DW}, we note that $\overline{u}$ (and similarly $\underline{u}$) depends only on two variables and is stable .
Therefore, $\overline{u}$ is the one-dimensional heteroclinic solution (see Lemma~3.2 in~\cite{ac}).
Applying Proposition~2.10 of ~\cite{FV}, we conclude that $u$ is energy minimizing.
\end{proof}
\begin{remark}
    We consider $u \in C^1(\mathbb{R}^n)$ to be a solution of \eqref{dwpde}, and $W$ is as in definition \ref{dw} with $p=2$ or as in definition \ref{DW}.\\
    If $\partial_iu>0$ on $\{u=0\}$ for all $i \in {8,\dots n}$, and $u$ satisfies the hypothesis of Proposition \ref{n-2dir}, the $u$ is the one dimensional heteroclinic solution.\\
    This is, to my knowledge, the most restrictive hypothesis on the monotonicity of a solution forcing it to be one dimensional symmetric.
\end{remark}
We now state a proposition that is clearer and represents the precise codimensional analogue of the Ambrosio--Cabrè theorem.
\begin{proposition}\label{n-2dirfin}
Let $u \in C^1(\mathbb{R}^n)$ be a bounded solution of \eqref{dwplap}, where $W$ is a double-well potential as in Definition~\ref{dw} with $p \in (1,+\infty)$, or as in Definition~\ref{DW} with $p=2$.
If
\[
\partial_i u > 0 \quad \text{for all } i = 3,\dots,n,
\]
then $u$ is the one-dimensional heteroclinic solution.
\end{proposition}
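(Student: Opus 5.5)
The plan is to reduce Proposition~\ref{n-2dirfin} to Proposition~\ref{n-2dirp}, and then to conclude with Theorem~\ref{n-7derivatives}, exactly as in the proof of Theorem~\ref{n-2}.

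First, we verify the hypotheses of Proposition~\ref{n-2dirp}. Hypothesis 1) is immediate, since $\partial_n u>0$ everywhere. For hypothesis 2), fix any $x\in\mathbb{R}^n$ and any $v\in S^{n-1}\cap\operatorname{span}(e_3,\dots,e_n)$ with nonnegative coordinates. Then
\[
\partial_v u(x)=\sum_{i=3}^n v_i\,\partial_i u(x)\ge 0 .
\]
Choose $\varepsilon>0$ small, say $\varepsilon<1$, so that every $v$ in $\operatorname{span}(e_3,\dots,e_n)\cap S^{n-1}$ with $|v-e_n|\le\varepsilon$ has nonnegative coordinates. This is slightly delicate, because vectors close to $e_n$ may have small negative components along $e_3,\dots,e_{n-1}$.

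To handle this, I would instead apply the statement after a linear change of coordinates. The open cone $\Gamma$ generated by $e_3,\dots,e_n$ contains the direction $w=(e_3+\dots+e_n)/\sqrt{n-2}$ in its interior relative to $\operatorname{span}(e_3,\dots,e_n)$. Every $v$ with $|v-w|\le\varepsilon$ in that span lies in $\overline{\Gamma}$, so $\partial_v u\ge 0$ everywhere. I would then rotate coordinates within $\operatorname{span}(e_3,\dots,e_n)$ so that $w$ becomes the new $e_n$. This rotation preserves the equation \eqref{dwplap}, since the $p$-Laplacian is rotation invariant, and it keeps $\operatorname{span}(e_3,\dots,e_n)$ fixed. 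In the new coordinates, $\partial_{e_n}u=\partial_w u>0$ everywhere, as a positive combination of strictly positive derivatives. Moreover, the $n-2$ cone $C$ of radius $\varepsilon$ around the new $e_n$ satisfies $\partial_v u\ge0$ on all of $\mathbb{R}^n$, and in particular on every level set $\{u=\delta\}$. Thus hypothesis 2) of Proposition~\ref{n-2dirp} holds with $C(\delta)=C$ for every $|\delta|<1$.

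Second, we apply Proposition~\ref{n-2dirp}. It first gives $\overline{u}\equiv1$ and $\underline{u}\equiv-1$, with limits taken along $w$, so that $u$ is a local energy minimizer. It then invokes Theorem~\ref{n-7derivatives}, whose hypothesis ($n-7$ directions of strict monotonicity on $\{u=0\}$) is implied by the strict positivity of $\partial_i u$ for $i=3,\dots,n$; these are at least $n-7$ linearly independent directions when $n\ge 8$. When $n\le 7$ that theorem still applies: the blow-down limit is a minimal cone in dimension $\le 7$, hence a half-space. Its conclusion is that $u$ is the one-dimensional heteroclinic solution.

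The main obstacle I expect is the step inside Proposition~\ref{n-2dirp} showing that the limit profiles $\overline{u},\underline{u}$ depend only on two variables and are therefore trivial by Theorem~1.1 of \cite{FV}. In our setting this step is in fact easier than in general, because the monotonicity holds everywhere rather than only on level sets. Passing to the $C^1_{loc}$ limit of translates along $w$, we get $\partial_w\overline{u}\equiv0$, while $\partial_v\overline{u}\ge0$ for all $v$ in a neighborhood of $w$ in the span. Applying this to $v$ and to $2w-v$ direction-wise (normalized) forces $\partial_v\overline{u}\equiv0$ for all $v\in\operatorname{span}(e_3,\dots,e_n)$. Hence $\overline{u}$ depends only on $x_1,x_2$, with no need for unique continuation; this matters, since unique continuation is unavailable for $p\neq2$.
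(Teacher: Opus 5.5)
Your proposal is correct and follows the paper's route: the paper gives no separate proof of this statement and treats it as a direct consequence of Proposition~\ref{n-2dirp}, which in turn ends with Theorem~\ref{n-7derivatives}. Your rotation sending $(e_3+\dots+e_n)/\sqrt{n-2}$ to $e_n$ correctly supplies a detail the paper glosses over: an $n-2$ cone centered at $e_n$ may contain directions with negative components along $e_3,\dots,e_{n-1}$, so the bare hypothesis $\partial_i u>0$ does not make $\partial_v u\ge 0$ on it.
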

\section{Possible developments}
It would be interesting to investigate whether Theorem \ref{n-2} remains valid under the weaker assumption of monotonicity on an $(n-2)$-dimensional cone restricted to the nodal level set ${u=\kappa}$. Another possible direction would be to develop a unique continuation principle for (\ref{dwplap}) in this setting, in order to deduce one-dimensional symmetry from monotonicity in $(n-2)$-dimensional cones defined on smaller domains
\section{Further observations}
For general solutions, the conjecture is false. 
We start by recalling a gradient bound for bounded and entire solutions (see~\cite{w}).

Let $u \in C^2(\mathbb{R}^n)$ be a bounded entire solution of~\eqref{dwpde}.  
Then, for all $x \in \mathbb{R}^n$, it holds
\begin{equation}\label{modicainequality}
\frac{1}{2} |\nabla u(x)|^2 \le W(u(x))
\quad \text{for all } x \in \mathbb{R}^n.
\end{equation}

Under a suitable assumption, we can prove an asymptotic property using exactly the technique employed in~\cite{ww}, Lemma~5.3, which is known as the sliding method and was developed in~\cite{bcn}.
\begin{proposition}\label{sequenzainfinito1d}
Let $u \in C^2$ be a bounded entire solution of~\eqref{dwpde}, where $W$ is a double-well potential as in Definition~\ref{dw} with $\kappa=0$ or $\kappa<0$, such that
\[
\{u \le 0\} \subseteq \{x : x_n \le \varphi(x')\}
\]
for some $\varphi : \mathbb{R}^{n-1} \to \mathbb{R}$.
Then there exists a sequence $x_h$ with $|x_h| \to \infty$ such that the translates
\[
u_h(x) = u(x + x_h)
\]
converge locally in $C^2$ to the one-dimensional heteroclinic solution.
\end{proposition}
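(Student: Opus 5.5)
The plan is to use the sliding method of~\cite{bcn}, in the form of Lemma~5.3 of~\cite{ww}, to build a point where $u$ is close to $1$ on a large half-space-like region while $u(x_h)$ stays pinned near a fixed interface value. A blow-up limit at such points should then be a one-dimensional monotone solution.

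First I extract geometric information from the hypothesis. Since $\{u\le 0\}$ lies below the graph of $\varphi$, $u>0$ on the epigraph $\{x_n>\varphi(x')\}$. Because $\kappa\le 0$, the level set $\{u=\kappa\}$ lies in $\{u\le 0\}$. Proposition~\ref{exponential estimates} then gives $u\to 1$ at points of the epigraph far from $\{u=\kappa\}$.

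I may assume $u$ is not identically $\pm1$, so $\{u\le 0\}$ is nonempty. Otherwise $u>0$ everywhere and $u\equiv 1$ by Lemma~2.1 of~\cite{FV}, a degenerate case to be excluded or treated separately. Fix a point $x_0$ with $u(x_0)\le 0$.

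Next I run the sliding argument. Take the one-dimensional heteroclinic $g$ with $g(0)=0$ and $g'>0$, and compare $u$ with the family $g_t(x)=g(x_n-t)$ as $t$ decreases from $+\infty$. For $t$ large, $g_t\le u$ should hold on the region above $\varphi$. The reason is that $u\ge 0$ there and $g_t$ is close to $-1$ except very high up, where Proposition~\ref{exponential estimates} forces $u$ close to $1$. Let $t^*$ be the infimum of admissible $t$.

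At $t^*$ there is a contact sequence $x_h$ with $u(x_h)-g_{t^*}(x_h)\to 0$. By the strong maximum principle applied to $u-g_{t^*}$, which satisfies a linear equation with bounded coefficient because $W'$ is Lipschitz, this contact cannot occur at a finite interior point unless $u\equiv g_{t^*}$, in which case the proposition holds trivially. So either $|x_h|\to\infty$, or contact happens on the boundary $\{x_n=\varphi(x')\}$, where $u=0$ or $u>0$. I expect to rule out the boundary case by choosing $t$ so that $g_t<0$ there. Then $u_h=u(\cdot+x_h)$ converges in $C^2_{\rm loc}$, by elliptic estimates and Arzelà--Ascoli, to a solution $u_\infty\ge g_{t^*}(\cdot+x_h^n e_n)$ limit, with equality at the origin. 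The strong maximum principle again forces $u_\infty$ to equal a translate of $g$, which is the one-dimensional heteroclinic.

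The main obstacle is justifying the start of the sliding, namely $g_t\le u$ for $t$ large. This requires uniform control of $u$ near the graph of $\varphi$, and $\varphi$ is arbitrary, possibly not even continuous. A second difficulty is keeping the contact values of $x_h^n$ relative to $t^*$ bounded, so that the limit is nontrivial. First I would try sliding in the direction $e_n$ only, restricting to the region where $x_n-\varphi(x')$ is large and using the quantitative decay in Proposition~\ref{exponential estimates} to dominate $g_t$. Failing that, I would replace the half-line profile by a compactly supported subsolution (a radial bump of $g$-type) slid along a path inside $\{u>0\}$, as in~\cite{ww}.
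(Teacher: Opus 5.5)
There is a genuine gap. Your main route cannot be started under the hypotheses, and your fallback lacks the ingredient that actually produces the one-dimensional limit.

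Sliding the profile $g(x_n-t)$ needs $g(x_n-t)\le u$ on $\{x_n>\varphi(x')\}$ for some $t$. Nothing bounds $\varphi$ from above, and just above any high point of $\{u\le 0\}$ one has $u\approx 0<g(x_n-t)$. The tilted heteroclinic already shows this: $u(x)=g(x\cdot\nu)$ with $\nu_n>0$, $\nu\neq e_n$, satisfies the hypotheses with $\varphi$ affine. Yet for every $t$ there are points of $\{x\cdot\nu>0\}$ with $x_n-t>x\cdot\nu$, so $g(x_n-t)>u$ there and $t^*=+\infty$. Restricting to $\{x_n-\varphi(x')>L\}$ does not help. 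For an arbitrary $\varphi$, vertical distance to the graph does not control the distance to $\{u=\kappa\}$, which is what Proposition~\ref{exponential estimates} needs, and the boundary-contact problem simply reappears. The nondegeneracy issue you flag (keeping $x_h^n-t^*$ bounded) is also left unresolved.

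The radial bump $v^R$ slid along $e_n$ inside $\{u>0\}$ is the right device; it is exactly the paper's proof. Because $B_R$ is compact and $\{u\le 0\}$ is closed and lies below a graph, the distance from $B_R(te_n)$ to $\{u=\kappa\}\subset\{u\le0\}$ tends to infinity as $t\to\infty$. So the sliding starts, and the first contact occurs at some $x_R\in\partial\{u>0\}$ with $u(x_R)=0$. This automatically removes your normalization problem.

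However, $x_R$ lies on $\partial B_R(t^*e_n)$, where $u$ and $v^R_{t^*e_n}$ both vanish. The strong maximum principle therefore gives no rigidity, and a limit of translates of $u$ is only known to lie above a half-line profile on a half-space. The missing idea is the gradient bound \eqref{modicainequality}:

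\begin{itemize}
\item The sliding lemma of~\cite{bcn} gives $u>v^R_{t^*e_n}$ in the ball. Hopf applied to the difference at $x_R$ then gives $|\nabla u(x_R)|\ge|\nabla v^R(Re_n)|$, and $|\nabla v^R(Re_n)|\to\sqrt{2W(0)}$ since $v^R(Re_n+\cdot)$ converges to the half-line solution.
\item Since $u(x_R)=0$, \eqref{modicainequality} gives the reverse bound $|\nabla u(x_R)|\le\sqrt{2W(0)}$.
\item Hence any $C^2_{\mathrm{loc}}$ limit $u^\infty$ of $u(\cdot+x_R)$ has $u^\infty(0)=0$ and $\frac12|\nabla u^\infty(0)|^2=W(u^\infty(0))$.
\item The equality case of Modica's inequality (Theorem~5.1 in~\cite{cgs}) then forces $u^\infty$ to be the heteroclinic.
\end{itemize}

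If $x_R$ stays bounded, equality holds for $u$ itself, so $u$ is the heteroclinic. You can then translate along its zero level set to obtain $|x_h|\to\infty$.
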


\begin{proof}
From Proposition~\ref{exponential estimates}, we have that for all $x' \in \mathbb{R}^{n-1}$,
\[
\lim_{x_n \to +\infty} u(x',x_n) = 1.
\]
Let us denote $\Omega = \{u > 0\}$.\\
The argument is exactly the same as in the proof of Lemma~5.3 in~\cite{ww}.\\
Take $R>0$, arbitrarily large.\\
We take the ball $B_R$ and define the unique radial solution of:
     $$\begin{cases}
        \Delta v_R= W'(v_R)  \quad \text{in} \quad B_R\\
        v_R >0  \quad \text{in} \quad B_R\\
        v_R\equiv 0 \quad \text{on} \quad \partial B_R
    \end{cases}$$
For any $x$ and $R>0$, denote
\[
v_x^R := v^R(\cdot - x).
\]
Since $\sup_{B_R} v^R < 1$, if $t$ is sufficiently large,
\[
v^R_{t e_n} < u \quad \text{in } B_R(t e_n).
\]
Let
\[
t^* := \inf\{t : B_R(t e_n) \subset \Omega\}.
\]
This value is well defined since $\partial \Omega$ lies below the complete graph of $\varphi$, and $B_R(t^* e_n)$ is tangent to $\partial \Omega$ at some point $x_R$.\\
By Lemma~3.1 in~\cite{bcn}, for all $t \ge t^*$,
\[
u > v^R_{t e_n} \quad \text{in } B_R(t e_n).
\]
The Hopf lemma then implies
\[
|\nabla u(x_R)| = \frac{\partial u}{\partial \nu}(x_R)
\ge \frac{\partial v^R_{t^* e_n}}{\partial \nu}(x_R),
\]
where $\nu$ is the upward unit normal to $\partial \Omega$.
Since $B_R(t^* e_n)$ is tangent to $\partial \Omega$ at $x_R$, we have
\[
\frac{\partial v^R_{t^* e_n}}{\partial \nu}(x_R)
= \frac{\partial v^R}{\partial r}(R e_n)
= |\nabla v(R e_n)|.
\]
On the other hand, as $R \to +\infty$, $v^R(R e_n + \cdot)$ converges to a positive solution of
\[
\begin{cases}
\Delta v^\infty = W'(v^\infty), & \text{in } \mathbb{R}^n_+,\\
v^\infty > 0, & \text{in } \mathbb{R}^n_+,\\
v^\infty = 0, & \text{on } \partial \mathbb{R}^n_+.
\end{cases}
\]
Because $v^R$ is radial, $v^\infty$ depends only on the $x_n$ variable and is the one-dimensional heteroclinic solution on $\mathbb{R}^n_+$ (see~\cite{bcn}).\\
In particular,
\[
\sqrt{2W(0)} = |\nabla v^\infty(0)|
= \lim_{R \to +\infty} |\nabla v^R(R e_n)|.
\]
Using the Modica inequality, (\ref{modicainequality}), we deduce that
\[
|\nabla u(x_R)| \ge \frac{\partial v^R_{t^* e_n}}{\partial \nu}(x_R)
\longrightarrow \sqrt{2W(0)}.
\]
We can therefore define the sequence
$u_R(x) = u(x + x_R)$ 
which solves $\Delta w = W'(w)$.
By the Ascoli--Arzelà theorem, $u_R$ converges (up to subsequences) to a limit solution $u^\infty$ such that
\[
\Delta u^\infty = W'(u^\infty)
\quad \text{and} \quad
u^\infty(0) = \sqrt{2W(0)}.
\]
From Theorem~5.1 in~\cite{cgs}, we deduce that $u^\infty$ is the one-dimensional heteroclinic solution.
\end{proof}

\textsc{Gabriele Ferla:}\\
\noindent
\textsc{Dipartimento di Matematica, Universit\`a di Trento,
Via Sommarive, 14, 38123 Povo TN, Italy
}
\\
\noindent
{\em Electronic mail address:}
ferlagabriele@gmail.com
\end{document}